\numberwithin{equation}{section}
\newtheorem{thm}{Theorem}[section]
\newtheorem{cor}[thm]{Corollary}
\newtheorem{lem}[thm]{Lemma}
\theoremstyle{definition}
\newtheorem{remark}[thm]{Remark}
\newcommand{\mr}{\mathbb{R}}
\newcommand{\mc}{\mathbb{C}}
\newcommand{\rw}{\rightarrow}
\newcommand{\nve}{\vec{\bm n}}
\DeclareMathOperator{\diam}{diam}
\DeclareMathOperator{\rank}{rk}
\def\@settitle{\begin{center}%
  \baselineskip14\p@\relax
  \bfseries
  \uppercasenonmath\@title
  \@title
  \ifx\@subtitle\@empty\else
     \\[1ex]\uppercasenonmath\@subtitle
     \footnotesize\mdseries\@subtitle
  \fi
  \end{center}%
}
\def\subtitle#1{\gdef\@subtitle{#1}}
\def\@subtitle{}
\begin{document}

\title[Quantitative Carleman-type estimates]
{Quantitative Carleman-type estimates for holomorphic sections over bounded domains}

\author[X. Qin]{Xiangsen Qin}
\address{Xiangsen Qin: \ Chern Institute of Mathematics and LPMC, Nankai University \\ Tianjin 300071, China}
\email{qinxiangsen@nankai.edu.cn}

\begin{abstract}
     This paper establishes quantitative Carleman-type inequalities for holomorphic sections of Hermitian vector bundles over bounded domains in $\mathbb{C}^n$ with $n \geq 2$. We first prove a Sobolev-type inequality with explicit constants for the Laplace operator, which leads to quantitative Carleman-type estimates for holomorphic functions. These results are then extended to holomorphic sections of Hermitian vector bundles satisfying certain curvature restrictions, yielding quantitative versions where previously only non-quantitative forms were available. The proofs refine existing methods through careful constant tracking and by estimating the radius of the uniform sphere condition of the boundary through the Lipschitz constant of its outward unit normal vector.
\end{abstract}

\maketitle
\tableofcontents
    \section{Introduction}
    In his celebrated paper \cite{C21}, Carleman established a beautiful proof of the two-dimensional isoperimetric inequality by proving 
    the following estimate:
\begin{equation}\label{equ:main}
\int_{\mathbb{D}^2}|f|^2\,dV \leq \frac{1}{4\pi}\left(\int_{\partial\mathbb{D}^2}|f|\,dS\right)^2,
\end{equation}
 for any \( f \in C^0(\overline{\mathbb{D}^2}) \) that is holomorphic in \( \mathbb{D}^2 \), where \( \mathbb{D}^2 \subset \mathbb{C} \) denotes the unit disk.  
Aronszajn \cite{A50} later extended \eqref{equ:main} to simply connected domains with analytic boundary, and Jacobs \cite{J72} further treated multiply connected domains. Some generalizations to $L^p$-norms have also been obtained; see, for example, \cite[Theorem 19.9]{K84} and \cite{MP84}, and other references.

In a different direction, Hang--Wang--Yan \cite{HWY07} proved that
\begin{equation}\label{equ:main1}
\|f\|_{L^{\frac{2n}{n-2}}(\mathbb{D}^n)} \leq n^{\frac{2-n}{2n}} \omega_n^{\frac{2-n}{2n(n-1)}} \|f\|_{L^{\frac{2(n-1)}{n-2}}(\partial\mathbb{D}^n)},
\end{equation}
where $f$ is a smooth harmonic function \( f \) on \( \overline{\mathbb{D}^n} \),  \( \mathbb{D}^n \subset \mathbb{R}^n \ (n \geq 3) \) is the unit ball,
and $\omega_n$ is the surface area of the unit sphere in $\mr^n$.
A natural and interesting question is whether an inequality of the form \eqref{equ:main1} can be established for general bounded domains and for general \( L^p \)-norms.  
The absence of a systematic treatment of this generalization in the literature forms the primary motivation for this work\\
\indent To streamline the subsequent presentation, we define a key notation. Let $\Omega \subset \mathbb{R}^n$ be a bounded $C^2$ domain with outward unit normal vector $\nve$. We set
$$\operatorname{LC}_\Omega:=\inf\left\{L\geq 0|\ |\nve(x)-\nve(y)|\leq L|x-y|,\ \forall x,y\in\partial\Omega\right\},$$
$$\operatorname{LD}_\Omega:=\left\{\begin{array}{ll}
 0, & \text{ if }\Omega\text{ is convex},\\
 \diam(\Omega)\cdot\operatorname{LC}_\Omega, &\text{ otherwise}.
\end{array}\right.$$

\indent To construct Carleman-type inequalities for holomorphic functions, we first prove the following Sobolev-type inequality for the 
Laplace operator $\Delta$. 
\begin{thm}\label{thm:laplace}
Let $\Omega\subset\mr^n(n\geq 3)$ be a bounded domain with $C^2$-boundary.
For  $1<p<\infty$, set 
$$p^*:=\frac{np}{n-1},\ p^{\sharp}:=\frac{np}{n+2p-1}.$$
Then, for every  $f\in C^2(\Omega)\cap C^0(\overline\Omega)$, there are constants $\delta_1:=\delta_1(n,p)>0$,
$\delta_2:=\delta_2(n,p,\operatorname{LD}_\Omega)>0$ such that 
\begin{equation}\label{equ:laplace}
\|f\|_{L^{p^*}(\Omega)}\leq \delta_1\|\Delta f\|_{L^{p^{\sharp}}(\Omega)}+\delta_2\|f\|_{L^p(\partial\Omega)}.
\end{equation}
Moreover, the constants $\delta_1$ and $\delta_2$ can be explicitly given by 
\[\delta_1 = \frac{2\omega_n^{-\frac{2}{n}}}{n-2} 
        \left( \frac{6^n p^\sharp}{3(p^\sharp - 1)} \right)^{1 - \frac{2p^\sharp}{n}} 
        \left( \frac{p^\sharp - 1}{n - 2p^\sharp} \right)^{\frac{2(p^\sharp - 1)}{n}},
  \]
\[
  \delta_2 = 2p^{-\frac{1}{np}} (p-1)^{\frac{1-n}{np}}
        \left( \frac{8^n \cdot n^{\frac{5}{2}} \cdot \omega_{n-1}}{(n-1)(2^n - 4)\omega_n^{1+1/n}} 
        \cdot\max\{8, \operatorname{LD}_\Omega\} \right)^{\frac{1}{p}}.
  \]
\end{thm}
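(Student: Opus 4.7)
The plan is to decompose $f$ into a Dirichlet part and a Poisson part. Let $v$ be the harmonic extension of $f|_{\partial\Omega}$ into $\Omega$ (which exists and is continuous up to $\partial\Omega$ since $\Omega$ is $C^2$-regular), and set $u := f-v$, so that $\Delta u = \Delta f$ in $\Omega$ and $u|_{\partial\Omega}=0$. By the triangle inequality $\|f\|_{L^{p^*}(\Omega)} \leq \|u\|_{L^{p^*}(\Omega)} + \|v\|_{L^{p^*}(\Omega)}$, the two summands will provide the $\delta_1$ and $\delta_2$ contributions in \eqref{equ:laplace} respectively.

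For the Dirichlet part I would use the Green's-function representation $u(x) = -\int_\Omega G_\Omega(x,y)\Delta f(y)\,dy$ together with the domain-independent pointwise bound $|G_\Omega(x,y)| \leq [(n-2)\omega_n]^{-1}|x-y|^{2-n}$, which follows by applying the maximum principle to $N(x,\cdot)-G_\Omega(x,\cdot)$, where $N$ is the Newton kernel. This dominates $|u|$ by the Riesz potential $I_2(|\Delta f|)$, and the choice $1/p^* = 1/p^\sharp - 2/n$ is exactly the Hardy--Littlewood--Sobolev scaling. To extract the explicit $\delta_1$, I would carry out a quantitative HLS argument: split the convolution with $|x-y|^{2-n}$ at some radius $R$ into a near part estimated by Hölder (using $|x-y|^{2-n}\in L^{\alpha}$ on a ball when $\alpha < n/(n-2)$) and a far part estimated by Young's inequality, then optimize in $R$; the exponents $1-2p^\sharp/n$ and $2(p^\sharp-1)/n$ appearing in $\delta_1$ are the natural weights produced by this optimization.

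For the Poisson part, write $v(x) = \int_{\partial\Omega} P_\Omega(x,y) f(y)\,dS(y)$ and apply Minkowski's integral inequality followed by Hölder on $\partial\Omega$ to reduce to an estimate of the mixed Poisson-kernel norm $\|P_\Omega(\cdot,y)\|_{L^{p^*}(\Omega)}$, integrable in $y\in\partial\Omega$. The essential geometric input is that $1/\operatorname{LC}_\Omega$ is a lower bound for the radius of the uniform sphere condition on $\partial\Omega$: every boundary point has an interior (and, in the non-convex case, also an exterior) tangent ball of that radius. Comparing $P_\Omega$ pointwise with the explicit Poisson kernel of an osculating ball, and pushing the computation through the Jacobian of the nearest-point projection onto $\partial\Omega$ within the sphere-condition tube, delivers the claimed $\delta_2$.

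The hard part will be the quantitative boundary estimate in the last step. The qualitative inequality $\|v\|_{L^{p^*}(\Omega)}\lesssim\|v\|_{L^p(\partial\Omega)}$ for harmonic $v$ is classical through standard elliptic regularity, but obtaining an \emph{explicit} constant forces one to replace the usual Schauder/Agmon--Douglis--Nirenberg black boxes with a geometric argument tied to the sphere condition; the identification of $1/\operatorname{LC}_\Omega$ with the osculating-ball radius is exactly the bridge that makes this possible. The combinatorial factors $8^n$, $2^n-4$, and $\omega_{n-1}/\omega_n^{1+1/n}$ appearing in $\delta_2$ should then emerge, respectively, from a covering of $\partial\Omega$ by osculating balls, from the Jacobian of the boundary projection in the tubular neighbourhood, and from the Minkowski/Hölder step that separates volume and boundary integration.
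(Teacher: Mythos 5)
Your decomposition $f=u+v$ into a Dirichlet part and a Poisson part is exactly what the paper does, just phrased via the Green representation formula $f=B_\Omega(\Delta f)+B_{\partial\Omega}(f|_{\partial\Omega})$, and your treatment of the interior term matches the paper's: bound $|G|$ by the Newton kernel, reduce to the Riesz potential $I_2$, and prove a quantitative HLS inequality (the paper's Lemma \ref{lem:better estimate} runs the near/far split through the Hardy--Littlewood maximal function rather than your H\"older/Young variant, but that is a cosmetic difference).

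The boundary term, however, has a genuine gap. You propose Minkowski's integral inequality followed by H\"older on $\partial\Omega$, which reduces the task to controlling $\|P_\Omega(\cdot,y)\|_{L^{p^*}(\Omega)}$ uniformly (or in some $L^{p'}$-average) over $y\in\partial\Omega$. But this quantity is infinite. Near $y$ the Poisson kernel behaves like $P_\Omega(x,y)\sim \delta(x)/|x-y|^n$, and a flat half-space model gives
\begin{equation*}
\int_{\{|x|<1,\,x_n>0\}}\left(\frac{x_n}{|x|^n}\right)^q dx \;\sim\; \int_0^1 r^{\,n-1+q-nq}\,dr,
\end{equation*}
which converges only for $q<\frac{n}{n-1}$, whereas $p^*=\frac{np}{n-1}\geq\frac{n}{n-1}$ for all $p\geq 1$ (with strict inequality, hence power divergence, for $p>1$). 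So the Minkowski reduction cannot be closed; there is no pointwise $L^{p^*}$-bound on the Poisson kernel to carry through the osculating-ball comparison. The paper instead establishes two endpoint estimates for the boundary operator $B_{\partial\Omega}$ --- the trivial $L^\infty(\partial\Omega)\to L^\infty(\Omega)$ bound from the maximum principle, and a weak-type $L^1(\partial\Omega)\to L^{\frac{n}{n-1},\infty}(\Omega)$ bound obtained by combining the kernel estimate $|\nabla_y G(x,y)|\lesssim |x-y|^{1-n}$ with the weak estimate of Lemma \ref{lem:weak} --- and then invokes a quantitative Marcinkiewicz interpolation (Lemma \ref{lem:interpolation}). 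This interpolation step is precisely where the exponent $1/p$ on the constant and the factors $(p-1)^{\frac{1-n}{np}}p^{-\frac{1}{np}}$ in $\delta_2$ come from, and it is the analytic mechanism your argument is missing. Your geometric input --- using $1/\operatorname{LC}_\Omega$ to quantify the uniform sphere condition and thereby bound the Poisson kernel (equivalently $\nabla_y G$) --- is the right idea and is exactly what the paper does in Lemma \ref{lem: estimate Green}, but it has to be fed into the weak-type endpoint, not into a strong $L^{p^*}$-norm that does not exist.
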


 In their seminal work \cite{CM16}, Cianchi and  Maz'ya established the existence of a constant  $\delta:=\delta(n,p)>0$ such that 
 \begin{equation}\label{equ:heassian}
 \delta\|f\|_{L^{p^*}(\Omega)}\leq \|\nabla^2 f\|_{L^{p^{\sharp}}(\Omega)}+\|f\|_{L^p(\partial\Omega)},\ \forall f\in C^2(\Omega)\cap C^0(\overline\Omega),
 \end{equation} 
 where $\nabla^2f$ denotes the Hessian of $f$. Thus, Inequality (\ref{equ:laplace}) holds for compactly supported $f$, modulo constants.
  Nevertheless, the general case appears not to be amenable to the techniques used in \cite{CM16}. 
 On the other hand, one need to note that a non-quantitative version of Theorem \ref{thm:laplace} has been presented in \cite[Theorem 1.13]{DHQ24}.
It should be noted that, by \cite[Theorem 3.24]{GGS10}, one cannot in general expect the constant $\delta_2$ in inequality (\ref{equ:laplace}) to depend solely on $n$ and $p$.\\
\indent We now outline the main ideas for proving Theorems \ref{thm:laplace} . Since corresponding non-quantitative versions have been established in \cite{DHQ24} and \cite{DHQ25}, our approach refines their methodology through careful constant tracking. A key aspect is controlling the radius of the uniform sphere condition satisfied by $\partial\Omega$ via the Lipschitz constant $\operatorname{LC}_\Omega$, an idea inspired by the work of \cite{LP20}.\\

 \indent An immediate consequence of Theorem \ref{thm:laplace} is the following corollary, obtained through approximation of convex domains by smooth convex domain
 (see \cite[Lemma 3.2.3.2]{G11}).
\begin{cor}\label{thm:convex}
Let $\Omega\subset\mr^n(n\geq 3)$ be a bounded convex domain.
For any $1<p<\infty$, set 
$$p^*:=\frac{np}{n-1},\ p^{\sharp}:=\frac{np}{n+2p-1}.$$
Then for any  $f\in C^2(\Omega)\cap C^0(\overline\Omega)$, there are constants $\delta_1:=\delta_1(n,p)>0$,
$\delta_2:=\delta_2(n,p)>0$ such that 
\begin{equation}\label{equ:convex}
\|f\|_{L^{p^*}(\Omega)}\leq \delta_1\|\Delta f\|_{L^{p^{\sharp}}(\Omega)}+\delta_2\|f\|_{L^p(\partial\Omega)}.
\end{equation}
Moreover, the constants $\delta_1$ and $\delta_2$ can be explicitly given by 
\[\delta_1 = \frac{2\omega_n^{-\frac{2}{n}}}{n-2} 
        \left( \frac{6^n p^\sharp}{3(p^\sharp - 1)} \right)^{1 - \frac{2p^\sharp}{n}} 
        \left( \frac{p^\sharp - 1}{n - 2p^\sharp} \right)^{\frac{2(p^\sharp - 1)}{n}},
  \]
\[
  \delta_2 = 2p^{-\frac{1}{np}} (p-1)^{\frac{1-n}{np}}
        \left( \frac{8^{n+1} \cdot n^{\frac{5}{2}} \cdot \omega_{n-1}}{(n-1)(2^n - 4)\omega_n^{1+1/n}}  \right)^{\frac{1}{p}}.
  \]
\end{cor}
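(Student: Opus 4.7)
The plan is to deduce the corollary from Theorem \ref{thm:laplace} by an interior approximation of $\Omega$ by smooth bounded convex domains. The guiding observation is that the constants in Theorem \ref{thm:laplace} depend on $\Omega$ only through $\operatorname{LD}_\Omega$, and for any convex $\Omega_j$ one has $\operatorname{LD}_{\Omega_j}=0$ by definition. Substituting $\operatorname{LD}_{\Omega_j}=0$ into the formula for $\delta_2$ in Theorem \ref{thm:laplace} collapses the factor $\max\{8,\operatorname{LD}_{\Omega_j}\}$ to $8$, which is exactly how the extra factor in $8^{n+1}=8^n\cdot 8$ appearing in Corollary \ref{thm:convex} is produced; the formula for $\delta_1$ does not involve $\Omega$ at all and is therefore unchanged.

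First, I would invoke \cite[Lemma 3.2.3.2]{G11} to obtain an increasing sequence of smooth bounded convex domains $\{\Omega_j\}_{j\geq 1}$ with $\Omega_j\Subset\Omega_{j+1}\Subset\Omega$ and $\bigcup_j\Omega_j=\Omega$. Since $f\in C^2(\Omega)\cap C^0(\overline\Omega)$, its restriction to $\overline{\Omega_j}$ lies in $C^2(\Omega_j)\cap C^0(\overline{\Omega_j})$, so Theorem \ref{thm:laplace} applies on each $\Omega_j$ and, using $\operatorname{LD}_{\Omega_j}=0$, yields
\begin{equation*}
\|f\|_{L^{p^*}(\Omega_j)}\leq \delta_1\|\Delta f\|_{L^{p^\sharp}(\Omega_j)}+\delta_2\|f\|_{L^p(\partial\Omega_j)},
\end{equation*}
with exactly the constants $\delta_1$ and $\delta_2$ appearing in the statement of the corollary.

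The remaining task is to pass to the limit as $j\to\infty$. The two interior norms converge to their counterparts over $\Omega$ by monotone convergence (if $\Delta f$ fails to be in $L^{p^\sharp}(\Omega)$, the resulting inequality is trivially true). The delicate ingredient is the boundary term: I need $\limsup_{j\to\infty}\|f\|_{L^p(\partial\Omega_j)}\leq \|f\|_{L^p(\partial\Omega)}$. Since $\Omega$ is convex, $\partial\Omega_j\to\partial\Omega$ in the Hausdorff sense, the nearest-point retraction $\pi_j\colon\partial\Omega_j\to\partial\Omega$ is well-defined and $1$-Lipschitz, and the classical fact from convex geometry that Hausdorff convergence of convex bodies implies convergence of surface areas gives $\mathcal{H}^{n-1}(\partial\Omega_j)\to \mathcal{H}^{n-1}(\partial\Omega)$. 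Coupling these with the uniform continuity of $f$ on $\overline\Omega$, a standard change-of-variables argument through $\pi_j$ produces the desired $\limsup$ bound.

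The main obstacle I expect is precisely this one-sided control of the boundary norm; the interior convergences are automatic, but the boundary step genuinely uses convexity twice (to force the retraction to be $1$-Lipschitz and to force perimeters to converge rather than merely to be lower semicontinuous). Once that inequality is in hand, letting $j\to\infty$ in the displayed inequality yields Corollary \ref{thm:convex} with the stated constants.
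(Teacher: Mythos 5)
Your overall strategy matches the paper's (which only sketches the argument as ``approximation of convex domains by smooth convex domains'' via Grisvard): approximate $\Omega$ from inside by smooth convex $\Omega_j$, apply Theorem \ref{thm:laplace} with $\operatorname{LD}_{\Omega_j}=0$ (which is exactly how $8^n\cdot\max\{8,\operatorname{LD}_\Omega\}$ becomes $8^{n+1}$), and let $j\to\infty$. The constant bookkeeping is correct. The interior norms pass to the limit by monotone convergence as you say.

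The flaw is in the boundary step. You appeal to a ``nearest-point retraction $\pi_j\colon\partial\Omega_j\to\partial\Omega$'' and claim it is $1$-Lipschitz. No such map exists: the nearest-point projection onto the convex set $\overline\Omega$ is the identity on $\partial\Omega_j\subset\Omega$, so it does not land in $\partial\Omega$; and the nearest-point projection onto the \emph{boundary} $\partial\Omega$ is, in general, neither single-valued on all of $\Omega$ nor $1$-Lipschitz (consider points near the medial axis of a long thin convex body). The correct $1$-Lipschitz map runs in the opposite direction: let $P_j\colon\mathbb R^n\to\overline{\Omega_j}$ be the metric projection onto the \emph{smaller} convex set. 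Restricted to $\partial\Omega$ it is $1$-Lipschitz and onto $\partial\Omega_j$, so the area formula gives
\[
\int_{\partial\Omega_j}|f|^p\,d\mathcal H^{n-1}\ \le\ \int_{\partial\Omega}|f\circ P_j|^p\,JP_j\,d\mathcal H^{n-1}\ \le\ \int_{\partial\Omega}|f\circ P_j|^p\,d\mathcal H^{n-1},
\]
and since $d_H(\partial\Omega_j,\partial\Omega)\to0$ implies $\sup_{\partial\Omega}|P_j-\mathrm{id}|\to 0$ while $f$ is uniformly continuous on $\overline\Omega$, the right side tends to $\|f\|_{L^p(\partial\Omega)}^p$, giving the $\limsup$ bound. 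With this replacement the convergence of surface areas you invoke is no longer needed. So the approach is right but the retraction, as stated, is the wrong object and must go the other way.
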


 \indent The harmonicity of holomorphic functions yields the following application of Theorem \ref{thm:laplace} and Corollary \ref{thm:convex}:
\begin{cor}\label{cor:holomorphic}
Let $\Omega\subset\mc^n(n\geq 2)$ be either a bounded domain with $C^2$-boundary or a bounded convex domain.
For any $1<p<\infty$, set 
$$p^*:=\frac{2np}{2n-1}.$$
Then for any $f\in C^0(\overline\Omega)\cap\mathcal{O}(\Omega)$, there is a constant $\delta:=\delta(\operatorname{LD}_\Omega)>0$
such that 
\begin{equation}\label{equ:holomorphic}
\|f\|_{L^{p^*}(\Omega)}\leq 2p^{-\frac{1}{2np}} (p-1)^{\frac{1-2n}{2np}}
        \left( \frac{64^n \cdot (2n)^{\frac{5}{2}} \cdot \omega_{2n-1}\cdot\delta}{(2n-1)(4^n - 4)\omega_{2n}^{1+1/(2n)}} 
         \right)^{\frac{1}{p}}\|f\|_{L^p(\partial\Omega)}.
\end{equation}
Furthermore, the constant $\delta$ can be taken to be 
\[\delta= \left\{\begin{array}{ll}
 8, & \text{ if }\Omega\text{ is convex},\\
 \max\{8, \operatorname{LD}_\Omega\} , &\text{ otherwise}.
\end{array}\right.
\]
\end{cor}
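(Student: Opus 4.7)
The strategy is to reduce to the harmonic case by replacing $|f|$ with its harmonic majorant, so that the $\Delta$-term in Theorem \ref{thm:laplace} or Corollary \ref{thm:convex} drops out entirely. Identifying $\mathbb{C}^{n}=\mathbb{R}^{2n}$, the exponent $p^{*}=\tfrac{2np}{2n-1}$ in the corollary matches the exponent $\tfrac{Np}{N-1}$ from those statements with ambient real dimension $N=2n$, so no rescaling of $p$ is needed; the stated constant will turn out to be exactly $\delta_{2}$ from Theorem \ref{thm:laplace} (or Corollary \ref{thm:convex}) with $n$ replaced by $2n$.

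First, for $f\in\mathcal{O}(\Omega)$, the function $\log|f|$ is plurisubharmonic, hence $|f|=\exp(\log|f|)$ is plurisubharmonic and in particular subharmonic on $\Omega\subset\mathbb{R}^{2n}$; it is also continuous on $\overline{\Omega}$ because $f\in C^{0}(\overline{\Omega})$. Both classes of admissible domains (bounded with $C^{2}$ boundary, or bounded convex) are regular for the Dirichlet problem: in the convex case, a supporting hyperplane at each boundary point yields a classical barrier. Hence one can solve $\Delta h=0$ in $\Omega$ with $h|_{\partial\Omega}=|f|$, obtaining $h\in C^{\infty}(\Omega)\cap C^{0}(\overline{\Omega})$. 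The maximum principle for subharmonic functions then gives $|f|\leq h$ throughout $\Omega$.

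Next, apply Theorem \ref{thm:laplace} to $h$ with ambient dimension $N=2n$ when $\Omega$ is $C^{2}$, or Corollary \ref{thm:convex} when $\Omega$ is convex. Since $\Delta h\equiv 0$, the term $\delta_{1}\|\Delta h\|_{L^{p^{\sharp}}(\Omega)}$ vanishes and
\begin{equation*}
\|h\|_{L^{p^{*}}(\Omega)}\leq \delta_{2}\,\|h\|_{L^{p}(\partial\Omega)}=\delta_{2}\,\|f\|_{L^{p}(\partial\Omega)},
\end{equation*}
where $\delta_{2}$ is the explicit constant of those statements evaluated with $n\mapsto 2n$. Combined with $\|f\|_{L^{p^{*}}(\Omega)}=\||f|\|_{L^{p^{*}}(\Omega)}\leq \|h\|_{L^{p^{*}}(\Omega)}$, this yields the inequality. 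Substituting $n\mapsto 2n$ in the formula for $\delta_{2}$ produces $8^{2n}=64^{n}$, $2^{2n}=4^{n}$, $(2n)^{5/2}$, $\omega_{2n-1}$, $\omega_{2n}^{1+1/(2n)}$ in the correct positions; the boundary factor becomes $\max\{8,\operatorname{LD}_{\Omega}\}$ in the $C^{2}$ case, while for convex $\Omega$ Corollary \ref{thm:convex} already absorbs the extra $8$ into $8^{2n+1}=64^{n}\cdot 8$, matching exactly the quantity $\delta$ as stated.

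The only delicate point is solvability of the Dirichlet problem with continuous boundary data on a convex domain that is not $C^{2}$. This follows from Perron's method together with the supporting-hyperplane barrier; alternatively, mirroring the derivation of Corollary \ref{thm:convex} from Theorem \ref{thm:laplace}, one can approximate $\Omega$ from inside by smooth convex subdomains $\Omega_{k}\nearrow\Omega$ (each with $\operatorname{LD}_{\Omega_{k}}=0$), run the above argument on each $\Omega_{k}$, and pass to the limit via dominated convergence and the uniform continuity of $f$ on $\overline{\Omega}$. Note that a direct splitting $f=\operatorname{Re}(f)+i\operatorname{Im}(f)$ followed by the triangle inequality would incur a spurious factor $2$, so it is essential to handle $|f|$ via its subharmonicity as above.
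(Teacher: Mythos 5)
Your proof is correct and reaches the same constant, but it follows a more roundabout route than the paper. The paper's argument is a one-liner: a holomorphic $f$ is itself harmonic as a complex-valued function on $\mathbb{R}^{2n}$ (each of $\operatorname{Re} f$ and $\operatorname{Im} f$ is pluriharmonic), so $\Delta f=0$, and one applies Theorem \ref{thm:laplace} (resp.\ Corollary \ref{thm:convex}) directly to $f$ with ambient dimension $2n$, whereupon the $\delta_1\|\Delta f\|_{L^{p^\sharp}}$ term vanishes. Your worry that this ``would incur a spurious factor $2$'' rests on a misreading of the hypotheses: the paper's convention (see the notations section) is that $C^k(\Omega)$ denotes the space of \emph{complex-valued} $C^k$ functions, so Theorem \ref{thm:laplace} already applies to $f$ as is, with $\|f\|_{L^q}$ meaning the $L^q$ norm of the modulus $|f|$ --- no splitting into real and imaginary parts, and no triangle inequality, is involved. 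Consequently the whole harmonic-majorant apparatus (subharmonicity of $|f|$, solving the Dirichlet problem with boundary data $|f|$, establishing regularity of convex domains via supporting-hyperplane barriers or inner approximation) is correct but superfluous: you introduce and then solve a Dirichlet problem that the paper's argument never needs. The one thing your route does \emph{not} lose is the constant --- since $\Delta h=0$ and $h|_{\partial\Omega}=|f|$ give exactly $\|h\|_{L^{p^*}}\le\delta_2\|f\|_{L^p(\partial\Omega)}$ and $|f|\le h$, you land on the same $\delta_2$ --- so the end result matches. Still, for a cleaner write-up, drop the majorant and just observe $\Delta f=0$.
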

\indent Finally, we further extend Corollary \ref{cor:holomorphic} to the setting of holomorphic sections of Hermitian vector bundles:
\begin{thm}\label{thm:section}
Let $\Omega \subset \mc^n$ ($n \geq 2$) be a bounded domain with smooth boundary, and let $E$ be a Hermitian holomorphic vector bundle defined in a neighborhood of $\overline{\Omega}$. Fix an integer $r$ with $0 \leq r \leq n$.
Suppose the curvature of $E$ satisfies the following bounds:
\begin{itemize}
    \item $\mathfrak{Ric}_{r,0}^E \geq -K$, where $K$ is a constant such that
    \[
    0 \leq K \leq \frac{1}{2} j_{n-1}^2 \left( \frac{\omega_{2n-1}}{2n} \right)^{\frac{1}{n}} |\Omega|^{-\frac{1}{n}},
    \]
    \item $\mathfrak{Ric}_{r,1}^E \geq -K_+$ for some constant $K_+ \geq 0$.
\end{itemize}
Here, $j_\nu$ denotes the first positive root of the Bessel function $J_\nu$ of the first kind of degree $\nu \in \mr$.

For any $1 < p < \infty$,  set $p^* := \frac{2np}{2n-1}$. Then, for any 
$
f \in C^0(\overline{\Omega}, \Lambda^{r,0} T^*\mc^n \otimes E) \cap \mathcal{O}(\Omega, \Lambda^{r,0} T^*\mc^n \otimes E),
$
\begin{equation*}
\|f\|_{L^{p^*}(\Omega)} \leq 2 (p-1)^{\frac{1-2n}{2np}} p^{-\frac{1}{2np}} (2n \omega_{2n}^{1-\frac{1}{2n}} C_3)^{\frac{1}{p}} \big( e^{K \cdot \diam(\Omega)} \big)^{1-\frac{1}{p}} \|f\|_{L^p(\partial\Omega)},
\end{equation*}
with the constant $C_3$ as given in Theorem \ref{thm:Green form}, depending on $n$, $K$, $K_+$, $\diam(\Omega)$, and $\operatorname{LD}_\Omega$.
\end{thm}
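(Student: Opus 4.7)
My plan is to deduce Theorem \ref{thm:section} from Theorem \ref{thm:Green form} in the same way Corollary \ref{cor:holomorphic} follows from the harmonicity of scalar holomorphic functions together with Theorem \ref{thm:laplace}. Identifying $\mathbb{C}^n$ with $\mathbb{R}^{2n}$, the target exponent $p^* = 2np/(2n-1)$ is exactly the trace-Sobolev exponent in ambient real dimension $2n$, and the prefactor $2 p^{-1/(2np)}(p-1)^{(1-2n)/(2np)}(2n\,\omega_{2n}^{1-1/(2n)})^{1/p}$ appearing in the statement matches precisely the tracked constant $\delta_2$ of Theorem \ref{thm:laplace} (with $n$ replaced by $2n$), amplified by $C_3^{1/p}$ from the Green kernel.

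The first step is to invoke Theorem \ref{thm:Green form} to produce, for any holomorphic section $f$, a pointwise estimate of the form
\[
|f(z)| \leq C_3 \int_{\partial\Omega} G(z,y)\, |f(y)|\, dS(y), \qquad z\in\Omega,
\]
where $G(z,y)$ is a scalar Green-type kernel whose singular part behaves like $|z-y|^{1-2n}$ (the fundamental solution of the Laplacian on $\mathbb{R}^{2n}$), and whose amplification under the curvature lower bound $\mathfrak{Ric}_{r,0}^E\geq -K$ is produced by a Bochner-formula argument that generates the exponential factor $e^{K\diam(\Omega)}$. This reduces the bundle-valued problem to a scalar boundary-to-interior potential estimate.

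The second step is to apply the trace-type Riesz potential estimate
\[
\Bigl\|\int_{\partial\Omega}\frac{g(y)}{|\cdot - y|^{2n-1}}\,dS(y)\Bigr\|_{L^{p^*}(\Omega)} \leq 2 p^{-\frac{1}{2np}}(p-1)^{\frac{1-2n}{2np}}\bigl(2n\,\omega_{2n}^{1-\frac{1}{2n}}\bigr)^{\frac{1}{p}}\|g\|_{L^p(\partial\Omega)}.
\]
This inequality, with the explicit constant as written, is precisely the spherical-cap decomposition plus H\"older argument that produces $\delta_2$ in Theorem \ref{thm:laplace} with $n$ replaced by $2n$. Applying it with $g=|f|$ and combining with Step 1 yields the factor $(2n\,\omega_{2n}^{1-1/(2n)}\,C_3)^{1/p}$ on the right-hand side of the required estimate.

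The main obstacle I foresee is accounting for the exponent $1 - 1/p$ on $e^{K\diam(\Omega)}$, which is not present in the scalar case. I expect this factor to arise from a H\"older interpolation between two ingredients available from Theorem \ref{thm:Green form}: a Bochner-type maximum principle of the shape $\|f\|_{L^\infty(\Omega)}\leq e^{K\diam(\Omega)}\|f\|_{L^\infty(\partial\Omega)}$, which carries the full exponential weight but no Sobolev improvement, and the curvature-free trace Riesz estimate of Step 2, which carries the Sobolev improvement but no exponential factor. The interpolation exponents forced by the scaling relation $p^*=2np/(2n-1)$ should then distribute the exponential with weight exactly $1-1/p$. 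Verifying that weight by direct constant-tracking---and in particular confirming that the harmonic-measure-type bound $\int_{\partial\Omega} G(z,y)\, dS(y)\lesssim e^{K\diam(\Omega)}$ enters with the sharp exponent---is the main bookkeeping task; once checked, the remainder of the argument is a verbatim repetition of the scalar proof of Corollary \ref{cor:holomorphic}, and no additional analytic ingredient beyond Theorem \ref{thm:Green form} is required.
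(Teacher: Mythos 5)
Your proposal identifies the correct skeleton of the paper's proof: the Green representation formula (from [DHQ25]) reduces a holomorphic $(r,0)$-form to a boundary integral against the Green form kernel, the kernel bounds of Theorem \ref{thm:Green form}(iii) are fed into the weak-type estimate of Lemma \ref{lem:weak}, and the $e^{K\diam(\Omega)}$ factor with exponent $1-\frac{1}{p}$ comes from interpolating against the Bochner maximum principle $\|f\|_{L^\infty(\Omega)}\leq e^{K\diam(\Omega)}\|f\|_{L^\infty(\partial\Omega)}$ (Lemma \ref{lem:Linfty estimate}). This is exactly the route the paper takes, via the more general Theorem \ref{thm:last} and its lemma part (iii), noting that for a holomorphic $(r,0)$-form both $\bar\partial f$ and $\bar\partial^* f$ vanish so the interior integrals drop out.

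However, the middle of your argument has the interpolation structure slightly scrambled, in a way worth fixing before you try to track constants. Your Step 2 presents the trace Riesz estimate as a self-contained ingredient, and your final paragraph then proposes to interpolate \emph{between} it and the maximum principle. In the paper, the trace Riesz estimate is not an input but the \emph{output} of Marcinkiewicz interpolation (Lemma \ref{lem:interpolation}): the two genuine endpoints are the $L^1(\partial\Omega)\to L^{\frac{2n}{2n-1},\infty}(\Omega)$ bound with constant $2n\,\omega_{2n}^{1-1/(2n)}C_3$ (Lemma \ref{lem:weak} plus the kernel bound) and the $L^\infty\to L^\infty$ bound with constant $e^{K\diam(\Omega)}$. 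Marcinkiewicz then produces precisely $2(p-1)^{\frac{1-2n}{2np}}p^{-\frac{1}{2np}}C_1^{1/p}C_2^{1-1/p}$, which is why $e^{K\diam(\Omega)}$ carries the exponent $1-\frac{1}{p}$; there is no separate ``H\"older interpolation'' step and no harmonic-measure bound is needed. Also note that the $L^1$ endpoint is \emph{not} curvature-free, as you claim --- $C_3$ depends on $K$, $K_+$, $K_-$; only the explicit factor $e^{K\diam(\Omega)}$ is isolated at the $L^\infty$ endpoint. With these corrections the bookkeeping is exactly the constant-tracking you anticipate, and no further analytic input is required.
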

 
A non-quantitative version of Theorem \ref{thm:section} was established in \cite[Corollary 1.4]{DHQ25}
 using Green forms estimates under the assumption that $K = 0$, and the proof therein can be readily adapted to this case by using Lemma \ref{lem: estimate Green}. When $K \neq 0$, however, the approach in \cite{DHQ25} is not directly applicable.
Instead, we employ some techniques from \cite{LZZ21} to establish the following estimates for the Green form: 
\begin{thm}\label{thm:Green form}
Let $\Omega \subset \mc^n$ ($n \geq 2$) be a bounded domain with smooth boundary, and let $E$ be a Hermitian holomorphic vector bundle defined in a neighborhood of $\overline{\Omega}$. Fix  $r,s\in\{0,\cdots,n\}$.
Suppose the curvature of $E$ satisfies the following bounds:
\begin{itemize}
    \item $\mathfrak{Ric}_{r,s}^E \geq -K$, where $K$ is a constant such that
    \[
    0 \leq K \leq \frac{1}{2} j_{n-1}^2 \left( \frac{\omega_{2n-1}}{2n} \right)^{\frac{1}{n}} |\Omega|^{-\frac{1}{n}},
    \]
    \item $\mathfrak{Ric}_{r,s+1}^E \geq -K_+$ for some constant $K_+ \geq 0$.
    
    \item $\mathfrak{Ric}_{r,s-1}^E \geq -K_+$ for some constant $K_{-} \geq 0$.
\end{itemize}

 Let $G_{r,s}^E(\cdot,\cdot)$ be the Schwarz kernel of the Dirichlet Green operator for $\square_{r,s}^E$ on $\Omega$,
then it satisfies the following estimates:  
\begin{itemize}
  \item[(i)] There is a constant $C_1:=C_1(n,K,\diam(\Omega))> 0$ such that 
  for all $(x,y)\in\Omega\times\Omega$, 
  $$|G_{r,s}^E(x,y)|\leq C_1|x-y|^{2-2n},$$
  where  
\[ C_1 =\frac{1}{(2n-2)\omega_{2n}}\cdot\left(K^{\frac{n-1}{2}}+\exp\left(2^{2n+11}(n\omega_{2n})^{\frac{1}{n-1}}\diam(\Omega)^{\frac{4n-2}{n-1}}K^{\frac{1}{2}}\right)\right).\]
    \item[(ii)] There is a constant $C_2:=C_2(n,K,\diam(\Omega),\operatorname{LD}_\Omega)> 0$ such that 
  for all $(x,y)\in\Omega\times\Omega$, 
  $$|G_{r,s}^E(x,y)|\leq C_2|x-y|^{1-2n}\delta(y),$$
  where  
\[ C_2 =\left(K^{\frac{n}{2}}+\exp\left(2^{n+8}(n\omega_{2n})^{\frac{1}{n}}\diam(\Omega)^{4}K^{\frac{1}{2}}\right)\right)\cdot\frac{ 2^{4n-2}\cdot\max\{8,\operatorname{LD}_\Omega\}}{(4^n-4)\omega_{2n} }.\]

  \item[(iii)] There is a constant $C_3:=C_3(n,K,K_{+},K_{-},\diam(\Omega),\operatorname{LD}_\Omega)> 0$ such that 
  for all $(x,y)\in\Omega\times\Omega$, 
  $$|\bar\partial_yG_{r,s}^E(x,y)|+|\bar\partial_y^*G_{r,s}^E(x,y)|\leq C_3|x-y|^{1-2n},$$
  where
\[ C_3 = 4^n\max\{C_1,C_2\}\cdot \sqrt{32C_{11}},\]
and the constant 
$$C_{11}=2^{3n^2+9n+6}\omega_{2n}\left(\max\{K_+,K_{-}\}\cdot\diam(\Omega)^2 + 1\right)^{n}\left(1 + K\diam(\Omega)^2\right)$$
is defined in Lemma \ref{lem:important estimate}.
\end{itemize}  
\end{thm}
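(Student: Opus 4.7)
My plan, following the approach of \cite{LZZ21}, is to exploit the heat semigroup representation of the Green operator,
\begin{equation*}
G_{r,s}^E(x,y) = \int_0^\infty H_{r,s}^E(t,x,y)\, dt,
\end{equation*}
where $H_{r,s}^E$ is the Schwartz kernel of $e^{-t\square_{r,s}^E}$ with Dirichlet boundary condition, and to reduce each of the three pointwise bounds to a matching estimate on this heat kernel. Under the hypothesis $\mathfrak{Ric}_{r,s}^E \geq -K$, the Weitzenb\"ock decomposition $\square_{r,s}^E = \nabla^*\nabla + R$ together with a Kato-type inequality and parabolic comparison yields, for $u$ solving $(\partial_t + \square_{r,s}^E)u = 0$ with Dirichlet data,
\begin{equation*}
(\partial_t + \Delta)|u|^2 \leq 2K|u|^2,
\end{equation*}
so that the maximum principle gives $|H_{r,s}^E(t,x,y)| \leq e^{Kt} h_t(x,y)$, where $h_t$ denotes the scalar Dirichlet heat kernel on $\Omega$.

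For $h_t$ I will use the Euclidean on-diagonal Gaussian bound $h_t(x,y) \leq (4\pi t)^{-n}e^{-|x-y|^2/(4t)}$ for short time (valid with Dirichlet condition by comparison with the free heat kernel) combined with the long-time eigenvalue decay $h_t \lesssim e^{-\lambda_1 t}$; the Faber--Krahn inequality supplies $\lambda_1(\Omega) \geq j_{n-1}^2(\omega_{2n-1}/(2n))^{1/n}|\Omega|^{-1/n}$, and the hypothesis $K \leq \tfrac{1}{2}j_{n-1}^2(\omega_{2n-1}/(2n))^{1/n}|\Omega|^{-1/n}$ is precisely what makes $\int_0^\infty e^{Kt}h_t\, dt$ converge. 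Splitting at $t_0 \sim |x-y|^2$, the short-time contribution supplies the Newtonian pole $|x-y|^{2-2n}$ with the prefactor $1/((2n-2)\omega_{2n})$ appearing in $C_1$, while the long-time contribution delivers the exponential correction in $\diam(\Omega)\cdot K^{1/2}$. For the sharper bound (ii), I will trade a single power of $|x-y|$ for a power of $\delta(y)$ by exploiting the uniform exterior sphere condition at $\partial\Omega$, whose radius is controlled by $1/\operatorname{LC}_\Omega$ following the idea attributed to \cite{LP20} in the introduction. Reflecting $H_{r,s}^E(t,x,\cdot)$ across the supporting sphere at the nearest boundary point of $y$ produces a kernel vanishing on a codimension-one surface, and a Hopf-style mean value then yields the stated $\delta(y)$ gain, with the factor $\max\{8,\operatorname{LD}_\Omega\}$ tracking the geometry.

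For part (iii), the derivative bounds come from a Caccioppoli argument on the ball $B(y,|x-y|/4)$, on which $u(\cdot) := G_{r,s}^E(x,\cdot)$ satisfies $\square_{r,s}^E u = 0$. Multiplying by $\chi^2 u$ and integrating by parts using the Bochner--Kodaira--Nakano identity, with the lower bounds $\mathfrak{Ric}_{r,s+1}^E \geq -K_+$ and $\mathfrak{Ric}_{r,s-1}^E \geq -K_-$ absorbing the curvature terms coming from $\bar\partial u$ and $\bar\partial^* u$ respectively, produces
\begin{equation*}
\int \chi^2\bigl(|\bar\partial u|^2 + |\bar\partial^*u|^2\bigr) \leq C_{11}\int|\nabla\chi|^2|u|^2,
\end{equation*}
which is precisely Lemma \ref{lem:important estimate} and accounts for the factor $\sqrt{32 C_{11}}$ in $C_3$. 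Combining this $L^2$ bound with the $L^\infty$ bounds from (i) and (ii) through a mean value inequality for $\square$-harmonic forms on a ball comparable to $|x-y|$ then yields $C_3 = 4^n\max\{C_1,C_2\}\sqrt{32 C_{11}}$.

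The hardest step will be extracting the distance-to-boundary factor $\delta(y)$ in (ii) quantitatively in the curved setting. The Bochner reduction delivers only pointwise domination by a scalar heat kernel, but to convert the Dirichlet boundary vanishing into a usable Hopf-type estimate one must run the reflection against a supporting sphere while keeping track simultaneously of the $e^{Kt}$ growth factor and of the $\operatorname{LD}_\Omega$ geometry, and it is here that most of the explicit constant-tracking in $C_2$ takes place; the corresponding argument in \cite{DHQ25} assumes $K = 0$, and its upgrade to $K \neq 0$ is the principal technical obstacle addressed in this theorem.
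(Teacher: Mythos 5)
Your parts (i) and (iii) track the paper's proof faithfully: the heat-semigroup representation of $G_{r,s}^E$, the domination $|H_{r,s}^E(2t,x,y)|\leq e^{2Kt}|H(t,x,y)|$ obtained by Bochner--Kato comparison (the paper cites \cite{DL82}), the Gaussian short-time bound, the Faber--Krahn bound $2K\leq\mu_1$, the split at $t_0\sim K\,|x-y|^2$, and for (iii) the Moser-iteration Caccioppoli lemma (Lemma \ref{lem:important estimate}) combined with the two-case argument comparing $\delta(y)$ with $|x-y|$. The point where you diverge is part (ii), and the deviation matters.

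You propose to obtain the $\delta(y)$ gain by ``reflecting $H_{r,s}^E(t,x,\cdot)$ across the supporting sphere at the nearest boundary point of $y$'' and running a Hopf-type argument. The difficulty is that inversion in a sphere (the Kelvin transform) is a conformal map that preserves harmonicity, not the heat equation: if $u$ solves $(\partial_t-\Delta)u=0$, its Kelvin reflect satisfies $(\partial_t-\Delta)v=|x|^{2-n}(1-|x|^{-4})\Delta u\neq 0$. So the reflected heat kernel is not a heat-kernel subsolution, and the Hopf-type vanishing you want does not come for free. The paper sidesteps this entirely. It first proves the $\delta(y)|x-y|^{1-2n}$ bound for the \emph{scalar Dirichlet Green function} $G$ on $\Omega$ by an \emph{elliptic} barrier argument in Lemma \ref{lem: estimate Green}(ii), which is where the exterior-sphere radius controlled via $\operatorname{LC}_\Omega$ from \cite{LP20} and the factor $\max\{8,\operatorname{LD}_\Omega\}$ enter. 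Then the $\delta(y)$ is threaded into the time integral in two pieces: the short-time contribution $\int_0^{t_0/(2K)}|H(t,x,y)|\,dt\leq|G(x,y)|$ is bounded directly by the barrier estimate for $G$; the long-time contribution is handled through the Dirichlet eigenfunction expansion, using the representation $\phi_k=\mu_k(-G)\phi_k$ together with Lemma \ref{lem: estimate Green}(ii) and the $C^0$-bound of Lemma \ref{lem:eigensection estimate} to prove $|\phi_k(y)|\leq C_6\,\mu_k^{n/2+1}\delta(y)$, and then summing the series to get $|H(t,x,y)|\leq C_7 e^{-\mu_1 t/2}t^{-n-1}\delta(y)$. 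This eigenfunction route is what makes the constant $C_2$ explicit and is what you would need to replace your parabolic reflection step with; as written, that step does not go through.
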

\leavevmode\\
\subsection*{Acknowledgements}
    		The author is grateful to Professor Fusheng Deng and Professor Xiaonan Ma for valuable discussions and suggestions on related topics.
     
\section{Notations and conventions}
Throughout this work, we adopt the following notational conventions.
Set $\mr_+:=\{x\in\mr|\ x>0\}$, and set $\mathbb{N}:=\{0,1,\cdots\}$. The surface area of the unit sphere of $\mr^n$ is denoted by 
$\omega_n$. For any $r>0$, and $x\in\mr^n$, set 
$$B(x,r):=\{y\in\mr^n|\ |x-y|<r\}.$$
For any Lebesgue measurable subset $A$ of $\mr^n$, we use $|A|$ to denote the Lebesgue measure of $A.$
The Laplace operator on $\mr^n$ is denoted by $\Delta$. Let $\Omega$ denote a bounded Lipschitz domain in $\mathbb{R}^n$. The diameter of $\Omega$ is written as $\diam(\Omega)$.  The Lebesgue measure on $\Omega$ is denoted by $dV$, while the Hausdorff measure on the boundary $\partial\Omega$ is denoted by $dS$.

We now introduce several function spaces used throughout this paper. Let \( k \in \mathbb{N} \cup \{\infty\} \) and \(\Omega \subset \mathbb{R}^n\) be an open subset. We denote by \( C^k(\Omega) \) the space of complex-valued \(C^k\)-smooth functions on \(\Omega\), and by \( C^k_c(\Omega) \) the subspace of \(C^k(\Omega)\) consisting of functions with compact support. The space of continuous functions on the closure \(\overline{\Omega}\) is written as \( C^0(\overline{\Omega}) \). When \(\Omega \subset \mathbb{C}^n\), the space of holomorphic functions on \(\Omega\) is denoted by \(\mathcal{O}(\Omega)\). For \( 1 \leq p\leq \infty \), the spaces \( L^p(\Omega) \) and \( L^p(\partial\Omega) \) consist of \(L^p\)-integrable functions on \(\Omega\) and \(\partial\Omega\), respectively, endowed with the norms \(\|\cdot\|_{L^p(\Omega)}\) and \(\|\cdot\|_{L^p(\partial\Omega)}\).

Let $\Omega\subset\mc^n$ be an open subset, and let $E$ be a Hermitian holomorphic vector bundle $E$ defined in a neighborhood of $\overline{\Omega}$.
  We denote the Hermitian inner product of $E$ by $\langle\cdot,\cdot\rangle$ and the corresponding fibre norm by $|\cdot|$. The space of holomorphic sections of $E$ over $\Omega$ is denoted by $\mathcal{O}(\Omega,E)$, while $C^k(\Omega,E)$ and $C^0(\overline{\Omega},E)$ represent the spaces of $C^k$-smooth sections over $\Omega$ and continuous sections over $\overline{\Omega}$, respectively, where $k\in\mathbb{N}\cup\{\infty\}$.  
   Let\( 1 \leq p\leq \infty \). The $L^p$-norm of a Lebesgue measurable section $s$ of $E$ is defined as the $L^p$-norm of the function $|s|.$
   The spaces \( L^p(\Omega,E) \) and \( L^p(\partial\Omega,E) \) then consist of \(L^p\)-integrable sections on \(\Omega\) and \(\partial\Omega\), respectively. For any integers $r,s\in\{0,\cdots,n\}$, let $\Lambda^{r,s}T^*\mc^n$ be the bundle of smooth $(r,s)$-forms on $\mc^n$.
   Denote by $\bar\partial$ the dbar operator on $\Lambda^{r,s}T^*\mc^n\otimes E$ and by $\bar\partial^*$ its formal adjoint. The $\bar\partial$-Laplacian on $
   \Lambda^{r,s}T^*\mc^n\otimes E$  is defined as $\square_{r,s}^E=\bar\partial^*\bar\partial+\bar\partial\bar\partial^*$. A section $f\in C^2(\Omega,\Lambda^{r,s}T^*\mc^n\otimes E)$ is said to be harmonic if it satisfies $\square_{r,s}^Ef=0$. We use $\nabla$ to denote the Chern connection of $\Lambda^{r,s}T^*\mc^n\otimes E$, and use $\nabla^*$ to denote its formal adjoint. The Weitzenböck curvature operator on $\Lambda^{r,s}T^*\mc^n\otimes E$ is defined by 
   $$\mathfrak{Ric}_{r,s}^E:=2\square_{r,s}^E-\nabla^*\nabla.$$
   For the local expression of $\mathfrak{Ric}_{r,s}^E$, please see \cite[Theorem 3.1]{L10}.
  This paper will repeatedly employ the following Bochner–Weitzenböck formula:
  for any $f\in C^2(\Omega,\Lambda^{r,s}T^*\mc^n\otimes E)$,
  $$\Delta\frac{|f|^2}{2}=\operatorname{Re}\langle (-2\square_{r,s}^E+\mathfrak{Ric}_{r,s}^E)f,f\rangle+|\nabla f|^2.$$
  
\section{Some useful lemmas}\ 
 In this section, we collect several lemmas that will be used later.\\
 \indent The following estimate for the Riesz potential is standard. For completeness and to provide an explicit constant, we include a proof here.
 \begin{lem}\label{lem:better estimate}
    Let $\Omega\subset\mr^n(n\geq 2)$ be an open subset.
    For any $0<a<n$ and any $1<p<\infty$ such that $n>pa,$ there is a constant $C:=C(n,p,a)>0$ such that 
     $$\|I_af\|_{L^{\frac{np}{n-pa}}(\Omega)}\leq C\|f\|_{L^p(\Omega)},\ \forall \ f\in L^p(\Omega),$$
   where 
   $$I_af(x):=\int_{\Omega}\frac{|f(y)|}{|x-y|^{n-a}}dV(y),\ \forall x\in \Omega,$$
   and $C$ can be taken to be 
   $$C=2\omega_n^{1-\frac{a}{n}}
     \left(\frac{6^np}{(2^a-1)(p-1)}\right)^{1-\frac{pa}{n}}
     \left(\frac{p-1}{n-pa}\right)^{\frac{(p-1)a}{n}}.$$
    \end{lem}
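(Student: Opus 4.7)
The plan is to prove this Riesz-potential estimate by the classical truncation–interpolation method, tracking constants with care at each step. Fix $x\in\Omega$ and a radius $R>0$ (to be chosen later), and split
$$I_af(x)=\int_{\Omega\cap B(x,R)}\frac{|f(y)|}{|x-y|^{n-a}}\,dV(y)+\int_{\Omega\setminus B(x,R)}\frac{|f(y)|}{|x-y|^{n-a}}\,dV(y)=:J_1(R)+J_2(R).$$
The outline is: bound $J_1$ by a multiple of $R^{a}\,Mf(x)$ via the Hardy--Littlewood maximal function $M$; bound $J_2$ by a multiple of $R^{-(n-ap)/p}\|f\|_{L^p}$ via H\"older's inequality; optimize over $R$ pointwise; and then pass to the $L^{np/(n-ap)}$ norm using the strong-type $(p,p)$ bound for $M$.

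For $J_1$, I decompose $B(x,R)$ into the dyadic annuli $A_k=B(x,R/2^k)\setminus B(x,R/2^{k+1})$, $k\geq 0$. On $A_k$ the kernel is bounded by $(R/2^{k+1})^{a-n}$, and $\int_{B(x,R/2^k)}|f|\,dV\leq(\omega_n/n)(R/2^k)^n\,Mf(x)$. Summing the resulting geometric series $\sum_{k\geq 0}2^{-ak}=2^a/(2^a-1)$ yields $J_1(R)\leq\frac{\omega_n\, 2^n}{n(2^a-1)}R^a\,Mf(x)$. For $J_2$, H\"older's inequality with conjugate exponent $p'$ combined with the identity $\int_{|y-x|>R}|x-y|^{-(n-a)p'}\,dV=\frac{\omega_n(p-1)}{n-ap}R^{-(n-ap)/(p-1)}$ (whose convergence uses exactly the hypothesis $ap<n$) gives $J_2(R)\leq\omega_n^{1-1/p}\left(\frac{p-1}{n-ap}\right)^{(p-1)/p}R^{-(n-ap)/p}\|f\|_{L^p}$.

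Balancing the two bounds by choosing $R$ so that $R^{n/p}$ equals the ratio of the two coefficients produces the pointwise interpolation estimate $|I_af(x)|\leq 2D\cdot(Mf(x))^{1-ap/n}\|f\|_{L^p}^{ap/n}$ for an explicit $D=D(n,p,a)$. I then raise to the power $q:=np/(n-ap)$ and integrate in $x$; the key identity $q(1-ap/n)=p$ converts $\|(Mf)^{1-ap/n}\|_{L^q}$ into $\|Mf\|_{L^p}^{1-ap/n}$. Invoking the $L^p$ bound $\|Mf\|_{L^p}\leq K_M\|f\|_{L^p}$—obtained by Marcinkiewicz interpolation between the trivial $L^\infty$ bound and the weak-$(1,1)$ bound on $M$ (with covering constant $3^n$ from the standard ball-covering lemma)—then gives $\|I_af\|_{L^q(\Omega)}\leq C\|f\|_{L^p(\Omega)}$ for a constant $C$ of the advertised type.

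The main obstacle will be matching the asserted explicit constant. The factor $\omega_n^{1-a/n}$ emerges automatically from combining the $\omega_n$-powers in $J_1$ (weighted by $1-ap/n$) and in $J_2$ (weighted by $ap/n$); the factor $(2^a-1)^{-(1-pa/n)}$ is the signature of the geometric series summed in the $J_1$ estimate; and $\left(\frac{p-1}{n-ap}\right)^{a(p-1)/n}$ comes from the $J_2$ coefficient raised to the $ap/n$ power. The $6^n$ inside the middle factor should split as $2^n$ (from the dyadic near-part bound in $J_1$) times $3^n$ (from the weak-$(1,1)$ covering constant in the maximal inequality), while the Marcinkiewicz factor $p/(p-1)$ is packaged with these inside the $1-pa/n$ exponent. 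Aligning all exponents into the precise grouping $\left(\frac{6^n p}{(2^a-1)(p-1)}\right)^{1-pa/n}\left(\frac{p-1}{n-pa}\right)^{(p-1)a/n}$ is routine but demands careful bookkeeping at the end.
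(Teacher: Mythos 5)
Your proposal follows exactly the paper's proof strategy: the dyadic near/far truncation of the Riesz potential, bounding the near part by $R^{a}\,\mathcal{M}f(x)$ via the Hardy--Littlewood maximal function, the far part by H\"older's inequality, pointwise balancing of $R$, and finally the $L^p$-boundedness of $\mathcal{M}$ with Marcinkiewicz constant $3^n p/(p-1)$. Your bookkeeping of where each factor in the constant originates ($\omega_n^{1-a/n}$ from mixing the two $\omega_n$-powers, $(2^a-1)^{-1}$ from the geometric series, $6^n = 2^n\cdot 3^n$ from the near part and the covering lemma, etc.) is accurate and matches the paper.

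One small point worth noting: in $J_1$ you correctly keep $|B(x,\rho)|=\frac{\omega_n}{n}\rho^n$, yielding $J_1\leq \frac{2^n\omega_n}{n(2^a-1)}R^a\,\mathcal{M}f(x)$, whereas the paper bounds this by $\frac{2^n\omega_n}{2^a-1}R^a\,\mathcal{M}f(x)$ (dropping the $1/n$). Carrying your tighter estimate through produces a constant smaller than the advertised one by the factor $n^{-(1-pa/n)}$; this still proves the lemma, but if you want to reproduce the stated constant verbatim you should replace $|B(x,\rho)|$ by the slightly cruder bound $\omega_n\rho^n$ as the paper implicitly does.
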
     
 
   \begin{proof}
    For any $x\in \Omega$ and $r>0$, we get 
    \begin{align*}
   \int_{B(x,r)}\frac{|f(y)|}{|x-y|^{n-a}}dV(y)&=\sum_{k=0}^\infty\int_{B(x,2^{-k}r)\setminus B(x,2^{-k-1}r)}\frac{|f(y)|}{|x-a|^{n-a}}dV(y)\\
    &\leq \sum_{k=0}^\infty|B(x,2^{-k}r)|\cdot (2^{k+1}r)^{a-n} \mathcal{M}f(x)\\
    &=\frac{2^n\omega_n}{2^a-1}\mathcal{M}f(x)r^a,
    \end{align*}
    where $\mathcal{M}f$ is the Hardy-Littlewood maximal function of $f$.
    By H\"older's inequality, 
    \begin{align*}
     \int_{\Omega\setminus B(x,r)}\frac{|f(y)|}{|x-y|^{n-a}}dV(y)&\leq \|f\|_{L^p(\Omega)}\left(\int_{\Omega\setminus B(x,r)}\frac{1}{|x-y|^{\frac{p(n-a)}{p-1}}}dV(y)\right)^{\frac{p-1}{p}}\\
    &\leq \|f\|_{L^p(\Omega)}\left(\omega_n\int_{r}^\infty s^{\frac{p(a-n)}{p-1}+n-1}ds\right)^{\frac{p-1}{p}}\\
    &=\left(\frac{(p-1)\omega_n}{n-pa}\right)^{\frac{p-1}{p}}\|f\|_{L^p(\Omega)}r^{a-\frac{n}{p}}.
    \end{align*}
    For any $x\in \Omega$, choose $r>0$ such that 
    $$\frac{2^n\omega_n}{2^a-1}\mathcal{M}f(x)r^a=\left(\frac{(p-1)\omega_n}{n-pa}\right)^{\frac{p-1}{p}}\|f\|_{L^p(\Omega)}r^{a-\frac{n}{p}},$$
    then 
    $$I_af(x)\leq C_0\mathcal{M}f(x)^{1-\frac{pa}{n}}\|f\|_{L^p(\Omega)}^{\frac{pa}{n}}, $$
    where 
    $$C_0:=2\left(\frac{2^n\omega_n}{2^a-1}\right)^{1-\frac{pa}{n}}\left(\frac{(p-1)\omega_n}{n-pa}\right)^{\frac{(p-1)a}{n}}.$$
    By the $L^p$-boundedness of the operator $\mathcal{M}$ (see \cite[Theorem 3.2.7]{H19}), 
    \begin{align*}
      \|I_af\|_{L^{\frac{np}{n-pa}}(\Omega)}\leq C
      \|\mathcal{M}f\|_{L^p(\Omega)}^{\frac{n-pa}{n}}\|f\|_{L^p(\Omega)}^{\frac{pa}{n}}
      \leq C_0\left(\frac{3^np}{p-1}\right)^{\frac{n-pa}{n}}\|f\|_{L^p(\Omega)}.
    \end{align*}
   \end{proof}
   
   \begin{remark}
   In \cite[Theorem 4.3]{LL01}, a different constant $C$ is given, and an optimal constant 
   $C$ is also given when $p=(2n)/(n+a)$.
   \end{remark}

 To introduce the  next lemma, we first recall the definition of {\it weak $L^p$ spaces}. Let $(X,\mu)$ be a measure space and $f$ be a measurable function on $X$.
  For any $1\leq p\leq \infty$, the weak $L^p$-norm of $f$ is defined by 
    $$\|f\|_{L^{p,\infty}(X,\mu)}:=\sup_{t>0}t\mu\left(\{x\in X|\ |f(x)|>t\}\right)^{\frac{1}{p}},$$
    and we say $f$ belongs to the space $L^{p,\infty}(X,\mu)$ if and only if $\|f\|_{L^{p,\infty}(X,\mu)}<\infty$.
    For $1\leq p\leq \infty$, the space of $L^p$-integrable functions on $X$ is denoted by $L^p(X,\mu)$, with the correspond norm written as
    $\|\cdot\|_{L^p(X,\mu)}$. We shall omit explicit reference to the measure $\mu$ when it is the Lebesgue measure  \\
    \indent The following weak-type estimate is a consequence of \cite[Lemma 5.3]{CM16} and its proof.
 \begin{lem}\label{lem:weak}
    Let $\Omega\subset\mr^n(n\geq 2)$ be a bounded Lipschitz domain, then for any $f\in C^0(\overline\Omega)$,
    $$\|Jf\|_{L^{\frac{n}{n-1},\infty}(\Omega)}\leq n\omega_n^{1-\frac{1}{n}}\|f\|_{L^1(\partial \Omega)},\ \forall f\in C^0(\overline\Omega),$$
    where 
    $$Jf(x):=\int_{\partial \Omega}\frac{|f(y)|}{|x-y|^{n-1}}dS(y),\ \forall x\in \Omega.$$
    \end{lem}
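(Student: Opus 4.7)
The plan is a classical weak-type argument via Chebyshev's inequality combined with a standard radial decomposition of the kernel $|x-y|^{1-n}$, entirely in the spirit of the proof of \cite[Lemma 5.3]{CM16}.

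I would first fix $t > 0$ and introduce the super-level set $E_t := \{x \in \Omega : Jf(x) > t\}$, which has finite Lebesgue measure since $\Omega$ is bounded. Chebyshev's inequality together with Fubini's theorem (applied to the nonnegative integrand $|f(y)|\,|x-y|^{1-n}$) gives
$$t|E_t| \;\leq\; \int_{E_t} Jf(x)\, dV(x) \;=\; \int_{\partial\Omega} |f(y)| \left(\int_{E_t} |x-y|^{1-n}\, dV(x)\right) dS(y).$$
It therefore suffices to bound the inner integral by $C(n)\,|E_t|^{1/n}$ uniformly in $y\in\partial\Omega$; once this is done, rearranging and taking $\sup_{t>0}$ produces the claimed weak $L^{n/(n-1)}$ estimate.

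For the uniform bound, I would split $E_t$ at a free parameter $r>0$ into the pieces $E_t \cap B(y,r)$ and $E_t \setminus B(y,r)$. On the inner piece, enlarging to the full ball and integrating in polar coordinates centered at $y$ yields $\int_{B(y,r)} |x-y|^{1-n}\,dV(x) = \omega_n r$; on the outer piece, the pointwise bound $|x-y|^{1-n} \leq r^{1-n}$ contributes at most $r^{1-n}|E_t|$. The natural balancing choice $r := (|E_t|/\omega_n)^{1/n}$ equates both contributions and produces the clean constant $2\omega_n^{1-1/n}$, which is dominated by $n\omega_n^{1-1/n}$ for $n\geq 2$, matching the prefactor asserted in the lemma.

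I do not anticipate any real obstacle: the Lipschitz hypothesis on $\partial\Omega$ enters only to guarantee that $Jf$ and the surface measure $dS$ are well-defined, and every step is elementary. The only minor point of care is recovering the prefactor in the exact form stated in the lemma, which is handled by the crude inequality $2 \leq n$.
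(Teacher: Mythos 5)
Your argument is correct and it is essentially the same standard Chebyshev--Fubini--radial-split proof that the paper defers to by citing \cite[Lemma 5.3]{CM16}. As you note, the balancing choice $r=(|E_t|/\omega_n)^{1/n}$ actually gives the sharper constant $2\omega_n^{1-1/n}$, which dominates the stated $n\omega_n^{1-1/n}$ only after the crude step $2\leq n$; the paper's stated constant is thus not optimal but is all it needs downstream.
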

    We conclude this section by stating  a version of the Marcinkiewicz interpolation theorem with explicit constants, which will be needed in the subsequent analysis. For a detailed proof, we refer the reader to \cite[Theorem 6.28]{F99}.
    \begin{lem}\label{lem:interpolation}
     Let $(X,\mu)$ and $(Y,\nu)$ be two $\sigma$-finite measure spaces, and denote by
     $\mathcal{M}(Y,\nu)$ the space of  $\nu$-measurable functions on $Y$.
     Let 
     $$T\colon L^{1}(X,\mu)+L^{\infty}(X,\mu)\to \mathcal{M}(Y,\nu)$$
      be a sublinear operator.
     Fix a parameter $n\in(1,\infty)$ and  suppose there are constants $C_1,C_2>0$ such that 
    $$\|Tf\|_{L^{\frac{n}{n-1},\infty}(Y,\nu)}\leq C_1\|f\|_{L^{1}(X,\mu)},\ \forall f\in L^{1}(X,\mu),$$
    $$\|Tf\|_{L^{\infty}(Y,\nu)}\leq C_2\|f\|_{L^{\infty}(X,\mu)},\ \forall f\in L^{\infty}(X,\mu),$$
    then for all $1<p<\infty$, the operator $T$ admits a bounded extension from  
    $L^p(X,\mu)$ to $L^{(np)/(n-1)}(Y,\nu)$ satisfying
    $$\|Tf\|_{L^{\frac{np}{n-1}}(Y,\nu)}\leq C\|f\|_{L^p(X,\mu)},\ \forall f\in L^p(X,\mu),$$
    where the constant $C$ is given explicitly by 
    $$C=2(p-1)^{\frac{1-n}{np}}p^{-\frac{1}{np}}C_1^{\frac{1}{p}}C_2^{1-\frac{1}{p}}.$$ 
   \end{lem}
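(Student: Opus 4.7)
The plan is to run the classical Marcinkiewicz decomposition argument (as in Folland's Theorem 6.28, which the paper cites) while carefully tracking constants throughout. Set $q := n/(n-1)$ and $r := pq = np/(n-1)$.

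First I would start from the layer-cake identity
\[
\|Tf\|_{L^r(\nu)}^r = r \int_0^\infty t^{r-1} \nu(\{|Tf| > t\}) \, dt.
\]
For each threshold $t > 0$ I would decompose $f = g_t + b_t$ by truncating at level $\tau(t) := t/(2C_2)$, with $b_t := f \cdot \mathbf{1}_{\{|f| > \tau(t)\}}$. This choice is engineered so that the $L^\infty$-hypothesis applied to $g_t$ gives $\|Tg_t\|_{L^\infty(\nu)} \leq C_2 \tau(t) = t/2$; combined with the sublinearity $|Tf| \leq |Tg_t| + |Tb_t|$, this forces $\nu(\{|Tf| > t\}) \leq \nu(\{|Tb_t| > t/2\})$. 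The weak $L^1 \to L^{q,\infty}$ hypothesis then bounds the right-hand side by $(2C_1 \|b_t\|_{L^1(\mu)}/t)^q$.

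Substituting these estimates into the layer-cake formula gives
\[
\|Tf\|_{L^r(\nu)}^r \leq r(2C_1)^q \int_0^\infty t^{r-q-1} \Big(\int_X |f(x)| \, \mathbf{1}_{\{t < 2C_2|f(x)|\}} \, d\mu(x)\Big)^q dt.
\]
I would apply Minkowski's integral inequality with weight $t^{r-q-1} dt$ to swap the order of integration, producing
\[
\int_X |f(x)| \Big(\int_0^{2C_2|f(x)|} t^{r-q-1} \, dt\Big)^{1/q} d\mu(x).
\]
The inner $t$-integral evaluates in closed form, and the exponent identity $1 + (r-q)/q = r/q = p$ reduces the outer integral to a constant multiple of $\|f\|_{L^p(\mu)}^p$. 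Taking the $r$-th root, together with $q/r = 1/p$ and $(r-q)/r = 1-1/p$, produces the factor $C_1^{1/p} C_2^{1-1/p}$, reflecting the interpolation parameter $\theta = 1/p$ between the endpoints $(1,q)$ and $(\infty,\infty)$.

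The main obstacle is recovering the precise numerical prefactor $2(p-1)^{(1-n)/(np)} p^{-1/(np)}$. The constant $2$ and the $(p-1)^{(1-n)/(np)}$ part emerge naturally from the $t/2$ split together with the identities $r - q = q(p-1)$ and $1/r = (n-1)/(np)$, but a direct Minkowski estimate with the split $\tau(t) = t/(2C_2)$ produces a prefactor of the form $(p/(p-1))^{(n-1)/(np)}$ rather than the sharper $p^{-1/(np)}(p-1)^{(1-n)/(np)}$ claimed. Obtaining the exact stated form may therefore require either optimizing the split level $\tau(t)$ or a slightly more refined interpolation estimate (e.g. via nonincreasing rearrangements); for the detailed bookkeeping I would appeal to the proof in Folland's Theorem 6.28 as the paper does.
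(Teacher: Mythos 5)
Your strategy is exactly the standard Marcinkiewicz argument and coincides with the proof of Folland's Theorem 6.28, which is all the paper cites here: decompose $f = g_t + b_t$ at level $t/(2C_2)$, use the strong $L^\infty\to L^\infty$ endpoint to force $\|Tg_t\|_\infty\le t/2$, invoke the weak $(1,\tfrac{n}{n-1})$ bound on $b_t$, and integrate via the layer-cake formula with Minkowski's integral inequality. Your bookkeeping is correct, and so is the discrepancy you flag: carrying the constants through gives the prefactor $2\bigl(\tfrac{p}{p-1}\bigr)^{(n-1)/(np)}C_1^{1/p}C_2^{1-1/p}$, which exceeds the paper's claimed $2\,p^{-1/(np)}(p-1)^{(1-n)/(np)}C_1^{1/p}C_2^{1-1/p}$ by the factor $p^{1/p}$, since $p^{(n-1)/(np)} = p^{1/p}\cdot p^{-1/(np)}$.

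However, your proposed remedy does not close this gap. If you split at $\tau(t)=t/(AC_2)$ for general $A>1$, the same computation gives the prefactor $\bigl(\tfrac{p}{p-1}\bigr)^{(n-1)/(np)}\,A(A-1)^{-1/p}$; this is minimized at $A=\tfrac{p}{p-1}$, yielding $\bigl(\tfrac{p}{p-1}\bigr)^{(n-1)/(np)}\,p(p-1)^{-(p-1)/p}$, which again differs from the paper's expression and still carries $p$ to the \emph{positive} power $(n-1)/(np)$. Rearrangement-based refinements do not change this sign either. Since Folland's Theorem 6.28 is stated without an explicit constant and the paper supplies none of the bookkeeping, the most plausible explanation is a transcription slip in the lemma: $p^{-\frac{1}{np}}$ should read $p^{\frac{n-1}{np}}$, i.e.\ the correct prefactor is $2\bigl(\tfrac{p}{p-1}\bigr)^{\frac{n-1}{np}}$. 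This propagates into $\delta_2$ of Theorem \ref{thm:laplace} and its corollaries, but only through a benign factor $p^{1/p}\le e^{1/e}$. Your instinct to distrust the stated constant is right; the fix is to accept the slightly larger (and provable) value rather than to search for a sharper split.
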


\section{Sobolev-type inequalities for the Laplace operator $\Delta$}
 We present the proof of Theorem \ref{thm:laplace} in this section, relying on the following estimates for the Green function. The proof of these estimates adapts the argument in \cite[Theorem 3.2]{GW82}. In particular, we implement the idea from \cite{LP20} of controlling the radius in the uniform sphere condition for $\partial\Omega$ via the Lipschitz constant $\operatorname{LC}_\Omega$.
 
\begin{lem}\label{lem: estimate Green}
Let $\Omega\subset\mr^n$ be a bounded domain with $C^2$-boundary.
Let $G(\cdot,\cdot)$ be the negative Dirichlet Green function of 
$\Omega$ with respect to the Laplace operator $\Delta$, then it satisfies the following estimates:
\begin{itemize}
  \item[(i)] For any $(x,y)\in \Omega\times\Omega$, 
  $$|G(x,y)|\leq \frac{|x-y|^{2-n}}{(n-2)\omega_n}$$
  \item[(ii)] For any $(x,y)\in \Omega\times\Omega$, 
  $$|G(x,y)|\leq \frac{2^{2n-2}\cdot\max\{8,\operatorname{LD}_\Omega\}}{(2^n-4)\omega_n }\delta(x)|x-y|^{1-n},$$
  where $\delta(x):=\inf_{z\in\partial\Omega}|x-z|$.
  \item[(iii)] For any $(x,y)\in \Omega\times\overline\Omega$, 
  $$|\nabla_yG(x,y)|\leq \frac{8^n n^{\frac{3}{2}}\omega_{n-1}}{(n-1)(2^n-4)\omega_n^2}\cdot\max\{8,\operatorname{LD}_\Omega\}|x-y|^{1-n},$$
  where $\nabla$ denotes the gradient operator.
\end{itemize}
\end{lem}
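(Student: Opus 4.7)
The plan is to adapt the classical Grüter--Widman strategy of \cite{GW82}, with two essential refinements: meticulous tracking of all multiplicative constants, and using the observation (following \cite{LP20}) that the radius of the uniform exterior sphere condition for a bounded $C^2$ domain can be controlled from below by $1/\operatorname{LC}_\Omega$. Supplemented with the trivial bound $\diam(\Omega)/8$ available when $\operatorname{LC}_\Omega$ is small, one takes $r_0 := \diam(\Omega)/\max\{8,\operatorname{LD}_\Omega\}$ as the working radius of the uniform exterior sphere, which explains the $\max\{8,\operatorname{LD}_\Omega\}$ factor appearing in (ii) and (iii).

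Part (i) is an immediate application of the maximum principle: the function $y \mapsto \Gamma(x-y) - G(x,y)$, where $\Gamma(z) := |z|^{2-n}/((n-2)\omega_n)$ is the Newton kernel, extends to a harmonic function on $\Omega$ that is nonnegative on $\partial\Omega$, hence nonnegative throughout $\Omega$, yielding the bound.

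For Part (ii), I proceed by dichotomy. If $\delta(x) \geq |x-y|/2$, then (i) gives (ii) after the elementary estimate $|x-y|^{2-n} \leq 2\delta(x)|x-y|^{1-n}$. In the complementary regime $\delta(x) < |x-y|/2$, let $x_0 \in \partial\Omega$ realize $\delta(x)$ and set $z_0 := x_0 + r_0\nve(x_0)$, so that $B(z_0,r_0) \subset \mathbb{R}^n \setminus \Omega$ with tangential contact at $x_0$. The barrier $V(z) := 1 - (r_0/|z-z_0|)^{n-2}$ is nonnegative and harmonic on $\Omega$, vanishes on $\partial B(z_0,r_0)$, and satisfies $V(x) \gtrsim \delta(x)/r_0$ at the chosen point. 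Comparing $G(\cdot,y)$ with a suitable multiple $M \cdot V$ on $\Omega \setminus B(y,\varepsilon)$ via the maximum principle---with $M$ pinned down from (i) on $\partial B(y,\varepsilon)$ and then taking $\varepsilon \to 0$---delivers the estimate in explicit form.

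Part (iii) is obtained by combining (ii) with gradient estimates for harmonic functions. For $y \in \Omega$ with $\delta(y) \geq c|x-y|$ for a suitable constant $c$, apply the standard interior gradient estimate to the harmonic function $G(x,\cdot)$ on a ball of radius proportional to $|x-y|$ and invoke (ii) to bound its sup; the surface-integral form of the mean value gradient estimate is where the factor $\omega_{n-1}$ appears. For $y$ near $\partial\Omega$, including $y \in \partial\Omega$ where $\nabla_y G$ reduces to the Poisson kernel $-\partial_{\nve(y)}G \cdot \nve(y)$, a differentiated barrier argument parallel to Part (ii), applied to the normal derivative at the nearest boundary point $y_0$ using the interior tangent ball of radius $r_0$, produces the estimate. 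The main obstacle is the bookkeeping in Part (ii): cleanly isolating the multiplicative constant from the barrier comparison so that its combination with the dichotomy threshold, the radius $r_0$, and the subsequent gradient transfer in Part (iii) yields precisely the stated explicit constants.
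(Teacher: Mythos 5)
Your outline for parts (i) and (iii) matches the paper's strategy closely. Part (i) is the standard maximum-principle comparison with the Newton kernel. Part (iii) is the right idea: dichotomize on whether $\delta(y)$ or $|x-y|$ is smaller, apply the interior gradient estimate for harmonic functions on a ball of radius proportional to the smaller of the two, and invoke (ii) (via symmetry of $G$) in the near-boundary case and (i) in the interior case. The paper in fact avoids the boundary case $y\in\partial\Omega$ entirely by a continuity reduction, so your differentiated-barrier step there is extra; but the core mechanism is the same.

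Part (ii), however, has a genuine gap. You propose comparing $G(\cdot,y)$ with $M\cdot V$ on the domain $\Omega\setminus B(y,\varepsilon)$, pinning $M$ from (i) on $\partial B(y,\varepsilon)$, and then sending $\varepsilon\to 0$. This cannot close: $G(\cdot,y)$ blows up like $\varepsilon^{2-n}$ on $\partial B(y,\varepsilon)$ while the barrier $V$ is uniformly bounded by $1$, so the threshold $M$ forced by the comparison diverges as $\varepsilon\to 0$ and the resulting bound is vacuous. The paper's fix is to \emph{localize} the comparison to an annular region near the exterior tangent sphere — precisely $\Omega\cap\bigl(B(x^*,2r)\setminus\overline{B(x^*,r)}\bigr)$ with $r:=\min\{|x-y|/8,\,r_0\}$ — and to normalize the barrier $u$ so that $u=0$ on $\partial B(x^*,r)$ and $u=1$ on $\partial B(x^*,2r)$. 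The crucial geometric point is that the outer boundary $\partial B(x^*,2r)\cap\Omega$ lies at distance at least $|x-y|/2$ from $y$, so (i) controls $G(\cdot,y)$ there by $2^{n-2}|x-y|^{2-n}/((n-2)\omega_n)$, and this is matched against the barrier's value $1$; on $\partial B(x^*,r)$ and on $\partial\Omega$ inside the annulus both quantities vanish, and the maximum principle then transfers the estimate to $x$. A smaller issue is your dichotomy threshold $\delta(x)\gtrless|x-y|/2$: you do not separately handle the regime where $\delta(x)\geq r_0$ yet $\delta(x)<|x-y|/2$, in which the linear lower bound $V(x)\gtrsim\delta(x)/r_0$ is false because $V\leq 1$. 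The paper's split $\delta(x)\gtrless\min\{r_0,|x-y|/8\}$ is engineered exactly so that in the complementary case the trivial bound $|x-y|/\delta(x)\leq\max\{8,\operatorname{LD}_\Omega\}$ is available, which is also where the $\operatorname{LD}_\Omega$ factor actually enters.
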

 \begin{proof}
(i) This is a standard result following from the maximum principle.\\
(ii) 
 Set
 $$r_0:=\left\{\begin{array}{ll}
 \infty, & \text{ if }\Omega\text{ is convex},\\
 \frac{1}{\operatorname{LC}_\Omega}, &\text{ otherwise}.
\end{array}\right.$$
By \cite[Theorem 1]{LP20}, $\Omega$ satisfies the uniform exterior sphere condition of radius $r_0$, i.e. 
for any $0<b<\infty$,  any $0<r\leq \min\{b,r_0\}$ and any $x\in\partial\Omega$, there exist $z\in\mr^n$ such that 
$$B(z,r)\subset\mr^n\setminus\overline\Omega,\ |z-x|=r.$$
\indent  Fix $x,y\in\Omega$ with $x\neq y$, we consider two cases. \\
{\bf Case 1:} $\delta(x)<\min\left\{\frac{|x-y|}{8},r_0\right\}$.\\
    \indent Set $r:=\min\left\{\frac{|x-y|}{8},r_0\right\}$. Choose $z_x\in\partial\Omega$ and choose $x^*\in\mr^n\setminus\overline{\Omega}$
     such that 
     $$|x-z_x|=\delta(x),\ |x^*-z_x|=r,\ B(x^*,r)\subset\mr^n\setminus\overline{\Omega}.$$
     Define 
    $$u(z):=
      \frac{2^n}{4-2^n}\left[\left(\frac{r}{|z-x^*|}\right)^{n-2}-1\right],\ \forall z\in \mr^n\setminus\{x^*\},$$
    then $u$ satisfies
    $$
    \left\{\begin{array}{ll}
      \Delta u=0&\text{ in }\mr^n\setminus\{x^*\},\\
       u=0 & \text{ on }\partial B(x^*,r),\\
       u=1 & \text{ on }\partial B(x^*,2r).
    \end{array}\right.
    $$ Moreover, 
    $$\sup_{z\in\mr^n\setminus B(x^*,r)}|\nabla u(z)|\leq 
      \frac{2^n(n-2)}{(2^n-4)r}.$$
    Since $z_x\in\partial B(x^*,r)$, the mean value theorem gives 
    $$u(x)=|u(x)-u(z_x)|\leq \frac{2^n(n-2)\delta(x)}{(2^n-4)r}.$$
    For any $z\in\partial B(x^*,2r)\cap\Omega$, we have 
    $$|z_x-z|\leq |z_x-x^*|+|x^*-z|=3r,\ |x-z|\leq |x-z_x|+|z_x-z|\leq 4r,$$
    $$|z-y|\geq |x-y|-|x-z|\geq |x-y|-4r\geq \frac{|x-y|}{2}.$$
    By (i), 
    $$
    |G(z,y)|\leq \frac{|z-y|^{2-n}}{(n-2)\omega_n}\leq \frac{2^{n-2}|x-y|^{2-n}}{(n-2)\omega_n }u(z).
    $$
    Note that $G(\cdot,y)|_{\partial\Omega}=0$. Applying the maximum principle in
    $\Omega\cap (B(x^*,2r)\setminus \overline{B(x^*,r)})\ni x$, we obtain
    $$|G(x,y)|\leq \frac{2^{n-2}|x-y|^{2-n}u(x)}{(n-2)\omega_n }\leq \frac{2^{2n-2}|x-y|^{2-n}\delta(x)}{(2^n-4)\omega_nr}.$$
    Hence,
    \begin{equation}\label{gradient:eequ1}
     |G(x,y)|\leq \frac{2^{2n-2}}{(2^n-4)\omega_n}\max\{8,\operatorname{LD}_\Omega\}|x-y|^{1-n}\delta(x). 
    \end{equation}
    {\bf Case 2:} $\delta(x)\geq \min\left\{r_0,\frac{|x-y|}{8}\right\}$.\\
    \indent In this case, 
    $$\frac{|x-y|}{\delta(x)}\leq \max\{8,\operatorname{LD}_\Omega\},$$
    so by (i),
    \begin{equation}\label{gradient:eequ2}
    |G(x,y)|\leq \frac{|x-y|^{2-n}}{(n-2)\omega_n}\leq \frac{1}{(n-2)\omega_n}\max\{8,\operatorname{LD}_\Omega\}|x-y|^{1-n}\delta(x).
    \end{equation}
    \indent  Combining {\bf Case 1} and {\bf Case 2}, i.e., Inequalities (\ref{gradient:eequ1}) and (\ref{gradient:eequ2}),
    we conclude that (ii) holds.\\
    (iii) Fix $x,y\in\overline\Omega$ with $x\neq y$. By continuity, we may assume $x,y\in\Omega$.
    Again, we consider two cases.\\
    {\bf Case 1:} $\delta(y)\leq |x-y|$.\\
    \indent In this case, we know $G(x,\cdot)$ is a harmonic function in $B\left(y,\frac{1}{2}\delta(y)\right)$. By the gradient estimate for 
    harmonic functions (see \cite[Chapter 4, Section 4, Problem 8]{J82}), 
    $$|\nabla_y G(x,y)|\leq \frac{4\sqrt{n}\gamma_n}{\delta(y)}\sup_{B\left(y,\frac{1}{2}\delta(y)\right)}|G(x,\cdot)|,$$
    where 
    $$\gamma_n:=\frac{n\omega_{n-1}}{(n-1)\omega_n}.$$
    For any $z\in B\left(y,\frac{1}{2}\delta(y)\right)$, 
    $$|x-z|\geq |x-y|-|y-z|\geq |x-y|-\frac{1}{2}\delta(y)\geq \frac{1}{2}|x-y|,$$
    $$\delta(z)\leq \delta(y)+|y-z|\leq 2\delta(y).$$ 
    By (ii),  
    \begin{align*}
    |G(x,z)|&\leq \frac{2^{2n-2}}{(2^n-4)\omega_n}\max\{8,\operatorname{LD}_\Omega\}|x-z|^{1-n}\delta(z)\\
         &\leq \frac{2^{3n-2}}{(2^n-4)\omega_n}\max\{8,\operatorname{LD}_\Omega\}|x-y|^{1-n}\delta(y).
    \end{align*}
    Therefore,
    \begin{equation}\label{gradient:eequ3}
    |\nabla_y G(x,y)|\leq \frac{\sqrt{n}2^{3n}\gamma_n}{(2^n-4)\omega_n}\max\{8,\operatorname{LD}_\Omega\}|x-y|^{1-n}.
    \end{equation}
    {\bf Case 2:} $\delta(y)>|x-y|$. \\
    \indent In this case, $G(x,\cdot)$ is a harmonic function in $B\left(y,\frac{1}{2}|x-y|\right)$, so 
    $$|\nabla_y G(x,y)|\leq \frac{4\sqrt{n}\gamma_n}{|x-y|}\sup_{B\left(y,\frac{1}{2}|x-y|\right)}|G(x,\cdot)|.$$
    For any $z\in B\left(y,\frac{1}{2}|x-y|\right)$, we have 
    $$|x-z|\geq |x-y|-|y-z|\geq \frac{1}{2}|x-y|,$$
    By (i), 
    $$|G(x,z)|\leq \frac{|x-z|^{2-n}}{(n-2)\omega_n}\leq \frac{2^{n-2}}{(n-2)\omega_n}|x-y|^{2-n}.$$
    Thus, 
    \begin{equation}\label{gradient:eequ4}
    |\nabla_y G(x,y)|\leq \frac{2^n\sqrt{n}\gamma_n}{(n-2)\omega_n}|x-y|^{1-n}.
    \end{equation}
    \indent Combining Inequalities (\ref{gradient:eequ3}) and (\ref{gradient:eequ4}), we obtain (iii).
 \end{proof}
 
 We now proceed to prove Theorem \ref{thm:laplace}. Under the same assumptions and notation as in the theorem.
 Let $G(\cdot,\cdot)$ be the negative Dirichlet Green function of $\Omega$.  By the Green representation formula, 
 for any $f\in C^2(\overline\Omega)$, and any $x\in\Omega$,
 $$f(x)=\int_{\Omega}G(x,y)\Delta f(y)dV(y)+\int_{\partial\Omega}\frac{\partial G(x,y)}{\partial\nve_y}dS(y).$$
 Using the Minkowski inequality and a smooth approximation argument, it suffices to establish the following estimates:

 \begin{lem}
The following estimates hold:
\begin{itemize}
  \item[(i)] \leavevmode For any $g \in L^{p^\sharp}(\Omega)$, there is a constant 
  $C:=C(n,p)>0$ such that 
  \[
  \|B_\Omega g\|_{L^{p^*}(\Omega)} \leq C \|g\|_{L^{p^\sharp}(\Omega)},
  \]
  where
  \[
  B_\Omega g(x) := \int_{\Omega} G(x, y) g(y) \, dV(y), \quad \forall x \in \Omega,
  \]
  and
  \[
  C_1 = \frac{2\omega_n^{-\frac{2}{n}}}{n-2} 
        \left( \frac{6^n p^\sharp}{3(p^\sharp - 1)} \right)^{1 - \frac{2p^\sharp}{n}} 
        \left( \frac{p^\sharp - 1}{n - 2p^\sharp} \right)^{\frac{2(p^\sharp - 1)}{n}}.
  \]

  \item[(ii)] \leavevmode For any $g \in L^p(\partial\Omega)$, there is a constant 
  $C:=C(n,p,\operatorname{LD}_\Omega)>0$ such that 
  \[
  \|B_{\partial\Omega} g\|_{L^{p^*}(\Omega)} \leq C \|g\|_{L^p(\partial\Omega)},
  \]
  where
  \[
  B_{\partial\Omega} g(x) := \int_{\partial\Omega} \frac{\partial G(x, y)}{\partial \nve_y} g(y) \, dS(y), \quad \forall x \in \Omega,
  \]
  and
  \[
  C = 2p^{-\frac{1}{np}} (p-1)^{\frac{1-n}{np}} 
        \left( \frac{8^n \cdot n^{\frac{5}{2}} \cdot \omega_{n-1}}{(n-1)(2^n - 4)\omega_n^{1+1/n}} 
        \cdot\max\{8, \operatorname{LD}_\Omega\} \right)^{\frac{1}{p}}.
  \]
\end{itemize}
\end{lem}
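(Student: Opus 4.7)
For part (i), the plan is a direct pointwise domination followed by a Riesz potential bound. Applying Lemma~\ref{lem: estimate Green}(i) gives
\[
|B_\Omega g(x)| \le \frac{1}{(n-2)\omega_n}\int_\Omega \frac{|g(y)|}{|x-y|^{n-2}}\,dV(y) = \frac{1}{(n-2)\omega_n}\,I_2 g(x),
\]
so the problem reduces to bounding $I_2 g$. Invoking Lemma~\ref{lem:better estimate} with $a=2$ and the exponent $p^\sharp$ in place of $p$, one verifies from $p^\sharp = np/(n+2p-1)$ that $np^\sharp/(n-2p^\sharp) = p^*$ and that the admissibility condition $n > 2p^\sharp$ reduces to $n>1$. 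Multiplying the resulting constant by the prefactor $1/((n-2)\omega_n)$ and using $\omega_n^{1-2/n}\cdot \omega_n^{-1} = \omega_n^{-2/n}$ yields exactly the stated $C_1$.

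For part (ii), my plan is to interpolate via Lemma~\ref{lem:interpolation} between a weak-type $(1,n/(n-1))$ endpoint and an $L^\infty\!\to\!L^\infty$ endpoint for the operator $B_{\partial\Omega}$. The weak endpoint follows from the gradient bound in Lemma~\ref{lem: estimate Green}(iii):
\[
\left|\frac{\partial G(x,y)}{\partial \nve_y}\right| \le |\nabla_y G(x,y)| \le K_0\,|x-y|^{1-n}, \quad K_0 := \frac{8^n n^{3/2}\omega_{n-1}}{(n-1)(2^n-4)\omega_n^{2}}\max\{8,\operatorname{LD}_\Omega\},
\]
so $|B_{\partial\Omega} g(x)| \le K_0\,Jg(x)$. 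Combining with Lemma~\ref{lem:weak} gives
\[
\|B_{\partial\Omega} g\|_{L^{n/(n-1),\infty}(\Omega)} \le K_0\,n\,\omega_n^{1-1/n}\,\|g\|_{L^1(\partial\Omega)}.
\]

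The key observation for the $L^\infty$ endpoint is that the Poisson kernel $\partial G/\partial \nve_y$ is a probability density on $\partial\Omega$: by the Hopf boundary point lemma applied to the negative Dirichlet Green function, $\partial G/\partial \nve_y \ge 0$, and applying the Green representation formula to the constant function $f\equiv 1$ (for which $\Delta f = 0$ and $f|_{\partial\Omega} = 1$) gives $\int_{\partial\Omega} \partial G(x,y)/\partial\nve_y\,dS(y) = 1$ for every $x\in\Omega$. Hence $\|B_{\partial\Omega} g\|_{L^\infty(\Omega)} \le \|g\|_{L^\infty(\partial\Omega)}$, giving the constant $C_2 = 1$. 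Feeding $C_1 = K_0\, n\omega_n^{1-1/n}$ and $C_2 = 1$ into Lemma~\ref{lem:interpolation} and simplifying $C_1^{1/p}$ reproduces the claimed constant, since the factor of $n$ from the weak-type bound combines with the $n^{3/2}$ inside $K_0$ to give $n^{5/2}$, and $\omega_n^{-2}\cdot \omega_n^{1-1/n} = \omega_n^{-1-1/n}$.

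The main obstacle I expect is not analytical but the careful bookkeeping of constants through the three reductions (pointwise Green-function domination, Riesz or weak-type boundary estimate, and Marcinkiewicz interpolation), so that the final expressions agree exactly with those advertised; this is precisely the point of the paper, so some care is required. A minor secondary point is justifying the sign convention making $\partial G/\partial \nve_y \ge 0$ and the applicability of the Green representation formula to constant boundary data on a $C^2$ domain, but both are standard.
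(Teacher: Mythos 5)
Your proof follows the paper's argument exactly: part (i) combines the pointwise Green-function bound from Lemma~\ref{lem: estimate Green}(i) with the Riesz potential estimate of Lemma~\ref{lem:better estimate} (with $a=2$, exponent $p^\sharp$), and part (ii) interpolates via Lemma~\ref{lem:interpolation} between the weak-$(1,n/(n-1))$ endpoint (from Lemma~\ref{lem:weak} and Lemma~\ref{lem: estimate Green}(iii)) and the trivial $L^\infty$ endpoint. You also supply the justification of $\|B_{\partial\Omega}\|_{L^\infty\to L^\infty}\le 1$ (Poisson kernel is a probability density on $\partial\Omega$) which the paper states without comment; all the constant bookkeeping checks out.
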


\begin{proof}
(i) This follows directly from Lemma \ref{lem:better estimate} and Part (i) of Lemma \ref{lem: estimate Green}.\\
(ii) First, observe that for any $g\in L^{\infty}(\partial\Omega)$,  
$$\|B_{\partial\Omega} g\|_{L^\infty(\Omega)}\leq \|g\|_{L^\infty(\partial\Omega)}.$$
Moreover, by Lemma \ref{lem:weak} and Part (iii) of Lemma \ref{lem: estimate Green} , for any $g\in L^1(\partial\Omega)$, 
$$\|B_{\partial\Omega} g\|_{L^{\frac{n}{n-1},\infty}(\Omega)}\leq C\|g\|_{L^1(\partial\Omega)},$$
where 
$$C=\frac{8^nn^{\frac{5}{2}}\omega_{n-1}}{(n-1)(2^n-4)\omega_n^{1+1/n}}\cdot\max\{8,\operatorname{LD}_\Omega\}.$$
The desired estimate now follows from Lemma \ref{lem:interpolation}.
\end{proof}

\section{Quantitative Carleman-type estimates for holomorphic sections}

In this section, we present the proofs of Theorem \ref{thm:section} and \ref{thm:Green form}. We work throughout under the same assumptions and notations as in
Theorem \ref{thm:Green form}. In particular, we fix the following notations: let $G(x,y)$ denote the Dirichlet Green function of $\Omega$, and let $H_{r,s}^E(t,x,y)$ (resp. $H(t,x,y)$) be the Dirichlet heat kernel associated with the operator $\square_{r,s}^E$ (resp. $-\Delta$) on $\Omega$.\\
\indent We begin by establishing a maximum principle for harmonic sections:
\begin{lem}\label{lem:Linfty estimate}
For any harmonic section $f\in C^0(\overline\Omega,\Lambda^{r,s}T^*\mc^n\otimes E)$, 
$$\sup_{\Omega}|f|\leq e^{K\diam(\Omega)}\sup_{\partial\Omega}|f|.$$
\end{lem}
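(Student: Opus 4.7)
My plan is to reduce the vector-bundle maximum principle to a scalar differential inequality for $|f|$, via the Bochner--Weitzenb\"ock formula and Kato's inequality, and then to apply a quantitative comparison principle for the operator $-\Delta - K$.

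First, because $f$ is harmonic ($\square_{r,s}^E f = 0$) under the standing hypothesis $\mathfrak{Ric}_{r,s}^E \geq -K$ of Theorem \ref{thm:Green form}, the Bochner--Weitzenb\"ock formula recorded in the preliminaries reduces to
\[
\Delta\Bigl(\tfrac{|f|^2}{2}\Bigr) = \operatorname{Re}\langle \mathfrak{Ric}_{r,s}^E f, f\rangle + |\nabla f|^2 \geq -K|f|^2 + |\nabla f|^2.
\]
On the open set $\{|f|>0\}$, rewriting the left-hand side as $|f|\,\Delta|f| + |\nabla|f||^2$ and invoking Kato's inequality $|\nabla f|^2 \geq |\nabla|f||^2$ yields
\[
\Delta|f| \geq -K|f|.
\]
A standard regularization (replace $|f|$ by $\sqrt{|f|^2 + \varepsilon^2}$ and let $\varepsilon \downarrow 0$) extends this inequality to all of $\Omega$ in the distributional sense.

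Next, I would exploit the scalar inequality $\Delta|f| + K|f| \geq 0$ through a maximum-principle comparison. The quantitative hypothesis on $K$, combined with the Faber--Krahn inequality and the crude volume bound $|\Omega| \leq (\omega_{2n-1}/(2n))\,\diam(\Omega)^{2n}$, gives $\sqrt{K}\,\diam(\Omega) \leq j_{n-1}/\sqrt{2}$; in particular, $K$ lies strictly below the first Dirichlet eigenvalue of $-\Delta$ on $\Omega$, which places us in the regime where $-\Delta - K$ admits a positive first eigenfunction and the standard comparison principle is available. Setting $M := \sup_{\partial\Omega}|f|$, the plan is to build a positive $C^2$ supersolution $\Phi$ on $\overline{\Omega}$ satisfying $\Delta\Phi + K\Phi \leq 0$ in $\Omega$, $\Phi \geq M$ on $\partial\Omega$, and $\sup_\Omega \Phi \leq e^{K\,\diam(\Omega)} M$; the auxiliary function $w := |f| - \Phi$ then satisfies $\Delta w + Kw \geq 0$ in $\Omega$ with $w \leq 0$ on $\partial\Omega$, and the comparison principle forces $|f| \leq \Phi \leq e^{K\,\diam(\Omega)} M$ on $\Omega$.

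The main technical obstacle is producing $\Phi$ with the precise constant $e^{K\,\diam(\Omega)}$. The most natural barrier is a suitably scaled and translated copy of the first radial Dirichlet eigenfunction $J_{n-1}(\sqrt{K}\,|x-x_c|)/|x-x_c|^{n-1}$ on a ball $B(x_c, j_{n-1}/\sqrt{K})$, which by $\sqrt{K}\,\diam(\Omega) \leq j_{n-1}/\sqrt{2}$ properly contains $\Omega$; this candidate solves $\Delta\Phi + K\Phi = 0$, but controlling its supremum-to-minimum ratio on $\Omega$ by $e^{K\,\diam(\Omega)}$ requires a careful monotonicity argument exploiting the explicit gap in $\sqrt{K}\,\diam(\Omega)$. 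A cleaner alternative is the probabilistic Feynman--Kac route: write $|f|(x) \leq \mathbb{E}_{x}\bigl[e^{K\tau}\,|f|(B_\tau)\bigr]$ for Brownian motion $B$ with first exit time $\tau$ from $\Omega$, and then estimate $\mathbb{E}_{x}[e^{K\tau}] \leq e^{K\,\diam(\Omega)}$ via the spectral tail bound on $\tau$ guaranteed by the hypothesis. Either way, converting the dimensionally more natural factor $e^{O(\sqrt{K}\,\diam(\Omega))}$ into the slightly stronger $e^{K\,\diam(\Omega)}$ demanded by the statement is the crucial quantitative step.
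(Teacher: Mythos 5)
Your reduction via Bochner--Weitzenb\"ock plus Kato's inequality to the scalar inequality $\Delta|f| + K|f| \geq 0$ is fine, but the proposal stops precisely at the step that carries the whole weight: you never actually produce the barrier $\Phi$ (or the exit-time moment estimate) that would yield the factor $e^{K\diam(\Omega)}$, and you say yourself that converting the dimensionally natural $e^{O(\sqrt{K}\,\diam(\Omega))}$ into $e^{K\diam(\Omega)}$ is ``the crucial quantitative step.'' As written the argument is not a proof. I would also caution that this comparison-principle route is intrinsically harder than the problem needs: a Bessel barrier on a superscribed ball, or the Feynman--Kac bound, genuinely produces a $\sqrt{K}\,\diam(\Omega)$-type exponent, so closing the gap would force you to invoke the smallness hypothesis on $K$ --- which the paper's proof of this lemma does not use at all.

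The paper takes a different and much shorter route that sidesteps Kato's inequality, barriers, and spectral considerations entirely. The Bochner--Weitzenb\"ock formula is tensorial in the Hermitian bundle, so one can absorb the lower curvature bound into the metric: after translating so that $0\in\Omega$, twist $E$ by the trivial line bundle $L$ equipped with the weight $h_z = e^{K|z|^2 - K\diam(\Omega)^2}$, chosen so that $\mathfrak{Ric}^{L\otimes E}_{r,s} \geq 0$ on $\Omega$. For a bundle with nonnegative Weitzenb\"ock curvature the squared pointwise norm of a harmonic section is subharmonic, so the scalar maximum principle gives
\[
\sup_{\Omega} |f|^2\, h_z(t,t) \;\leq\; \sup_{\partial\Omega} |f|^2\, h_z(t,t),
\]
where $t$ is the holomorphic frame of $L$. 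Untwisting --- comparing the weight on $\Omega$ with the weight on $\partial\Omega$ --- immediately produces the exponential factor. This ``twist away the negative curvature'' device is standard, works with no restriction on $K$, and is essentially a one-paragraph argument; you should study it, as it is considerably more robust than the barrier construction you sketched.
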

\begin{proof}
If $\mathfrak{Ric}_{r,s}^E\geq 0$, then the Bochner-Weitzenb\"ock formula implies
$$\Delta|f|^2\geq 0.$$
By the maximum principle for subharmonic functions, it follows that 
$$\sup_{\Omega}|f|^2\leq \sup_{\partial\Omega}|f|^2.$$
In the general case,  we may assume $0\in\Omega$. 
Let $L$ be the trivial line bundle on $\overline\Omega$ equipped with the Hermitian metric $h_z=e^{K|z|^2-K\diam(\Omega)^2}$,
so that $\mathfrak{Ric}_{r,s}^{L\otimes E}\geq 0$ in $\Omega$. Let $t$ be the canonical holomorphic frame of $L$.
Then we have
$$\sup_{z\in\Omega}|f(z)|^2\cdot h_z(t,t)\leq \sup_{z\in\partial\Omega}|f(z)|^2\cdot h_z(t,t),$$
i.e.
$$\sup_{z\in\Omega}|f(z)|^2 e^{K|z|^2-K\diam(\Omega)^2}\leq \sup_{z\in\Omega}|f(z)|^2 e^{K|z|^2-K\diam(\Omega)^2}.$$
This yields  the desired estimate.
\end{proof}
A direct consequence of Lemma \ref{lem:Linfty estimate} is the following:
\begin{cor}
For any harmonic section $f\in  C^0(\overline\Omega,\Lambda^{r,s}T^*\mc^n\otimes E)$
such that $f|_{\partial\Omega}=0$, then $f\equiv 0$ in $\Omega$. In particular, all eigenvalues of 
$\square_{r,s}^E$ are strictly positive.
\end{cor}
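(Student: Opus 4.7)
The plan is to derive both assertions directly from the maximum-principle estimate of Lemma \ref{lem:Linfty estimate}, combined with the standard non-negativity of $\square_{r,s}^E$ coming from its definition as $\bar\partial^*\bar\partial+\bar\partial\bar\partial^*$.

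For the first assertion, I simply substitute into Lemma \ref{lem:Linfty estimate}. Because $f|_{\partial\Omega}=0$, the right-hand side of
$$\sup_{\Omega}|f|\leq e^{K\diam(\Omega)}\sup_{\partial\Omega}|f|$$
vanishes, forcing $|f|\equiv 0$ on $\Omega$, which gives $f\equiv 0$ as a section.

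For the eigenvalue claim, I combine two ingredients. First, $\square_{r,s}^E=\bar\partial^*\bar\partial+\bar\partial\bar\partial^*$ is formally non-negative: for a smooth Dirichlet eigensection $g$ (with $g|_{\partial\Omega}=0$), integration by parts gives
$$\langle \square_{r,s}^E g,g\rangle=\|\bar\partial g\|^2+\|\bar\partial^* g\|^2\geq 0,$$
so every eigenvalue of the Dirichlet realization of $\square_{r,s}^E$ is non-negative. Second, the first part of the corollary rules out $0$ as an eigenvalue: any eigensection with eigenvalue $0$ would be a harmonic section vanishing on $\partial\Omega$, hence identically zero, contradicting the definition of an eigensection. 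Therefore all eigenvalues are strictly positive.

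There is no real obstacle in this corollary itself, since the substantive work is already done in Lemma \ref{lem:Linfty estimate}. The only minor point to verify is that the boundary terms in the integration-by-parts formula indeed vanish under the Dirichlet boundary condition so that $\square_{r,s}^E$ is non-negative on the appropriate domain, which is standard.
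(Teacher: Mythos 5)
Your proof is correct and follows essentially the same route as the paper, which presents the corollary as a direct consequence of Lemma \ref{lem:Linfty estimate}. You have merely filled in the standard details (non-negativity of the Dirichlet spectrum of $\square_{r,s}^E$ via integration by parts, and the exclusion of the eigenvalue $0$ from part one) that the paper leaves implicit.
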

\indent The following lemma, which is inspired by \cite[Lemma 2.2]{LZZ21}, provides a $C^0$-estimate for eigenfunctions of 
$-\Delta$.
\begin{lem}\label{lem:eigensection estimate}
Suppose $\phi\in C^2(\overline\Omega)$ and $\lambda\geq 0$ satisfy
$$-\Delta\phi=\lambda\phi,\quad \phi|_{\partial\Omega}=0.$$
Then  
\begin{equation}
\sup_{\Omega}|\phi|^2\leq 2^{n^2+2n}\lambda^n\int_{\Omega}|\phi|^2dV.
\end{equation}
\end{lem}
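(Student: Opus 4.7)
The plan is to prove this $C^{0}$--estimate via the standard heat-kernel representation of Dirichlet eigenfunctions, exploiting ultracontractivity of the heat semigroup. Throughout, I view $\Omega\subset\mc^{n}$ as a domain in $\mr^{2n}$, and I use $H(t,x,y)$ as introduced at the start of the section, i.e.\ the Dirichlet heat kernel of $-\Delta$ on $\Omega$.

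First, I would observe that since $-\Delta\phi=\lambda\phi$ with $\phi|_{\partial\Omega}=0$, the semigroup $e^{t\Delta}$ (with Dirichlet boundary) acts on $\phi$ by multiplication by $e^{-\lambda t}$. Writing this out with the heat kernel gives, for every $x\in\Omega$ and $t>0$,
\begin{equation*}
\phi(x)\;=\;e^{\lambda t}\int_{\Omega} H(t,x,y)\,\phi(y)\,dV(y).
\end{equation*}
Applying the Cauchy--Schwarz inequality and then the semigroup/symmetry identity $\int_{\Omega}H(t,x,y)^{2}\,dV(y)=H(2t,x,x)$, I obtain
\begin{equation*}
|\phi(x)|^{2}\;\leq\;e^{2\lambda t}\,H(2t,x,x)\,\|\phi\|_{L^{2}(\Omega)}^{2}.
\end{equation*}

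The next step is to bound the on-diagonal Dirichlet heat kernel by the Euclidean Gaussian kernel, $H(t,x,x)\leq (4\pi t)^{-n}$ (here $2n$ is the real dimension), which is immediate from the maximum principle applied to $H_{\mr^{2n}}(t,\cdot,\cdot)-H(t,\cdot,\cdot)$ on $[0,\infty)\times\Omega$. Plugging this in yields
\begin{equation*}
|\phi(x)|^{2}\;\leq\;e^{2\lambda t}(8\pi t)^{-n}\,\|\phi\|_{L^{2}(\Omega)}^{2}.
\end{equation*}

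Finally, I would optimize in $t$: the choice $t=n/(2\lambda)$ (valid when $\lambda>0$; the case $\lambda=0$ is trivial by the corollary preceding the lemma, which forces $\phi\equiv 0$) gives the sharp prefactor $\bigl(e/(4\pi n)\bigr)^{n}$, which is $\leq 1$ for all $n\geq 1$ and can therefore be trivially absorbed into the much cruder $2^{n^{2}+2n}$ stated in the lemma. I do not expect a genuine obstacle here: each ingredient (heat-kernel representation of eigenfunctions, Chapman--Kolmogorov identity, Gaussian upper bound on the Dirichlet kernel) is classical, and the loose constant $2^{n^{2}+2n}$ is clearly only chosen for later convenience. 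The only point requiring a small amount of care is the clean justification of the comparison $H(t,x,x)\leq (4\pi t)^{-n}$ for a general bounded smooth domain, which follows from the parabolic maximum principle after noting that the Euclidean kernel is positive on $\partial\Omega$.
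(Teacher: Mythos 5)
Your proof is correct, but it follows a genuinely different route from the paper's. The paper proves this lemma by Moser iteration: from the pointwise differential inequality $\Delta|\phi|^2\geq -2\lambda|\phi|^2$ one integrates by parts against $|\phi|^{2(2p-1)}$ (using $\phi|_{\partial\Omega}=0$), feeds the resulting gradient bound into the Sobolev inequality on $\mr^{2n}$, and iterates with exponents $p=\alpha^{k}$, $\alpha=n/(n-1)$, to pass from $L^{2}$ to $L^{\infty}$; the constant $2^{n^{2}+2n}$ is exactly what that iteration scheme spits out. You instead use the spectral representation $\phi=e^{\lambda t}e^{t\Delta}\phi$, Cauchy--Schwarz, the Chapman--Kolmogorov identity $\int_{\Omega}H(t,x,y)^{2}dV(y)=H(2t,x,x)$, the domain-monotonicity bound $H(2t,x,x)\leq(8\pi t)^{-n}$, and a one-variable optimization $t=n/(2\lambda)$. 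Every step is standard and the $\lambda=0$ case is correctly reduced to the maximum principle, so the argument is complete; moreover it yields the sharper constant $\bigl(e/(4\pi n)\bigr)^{n}$, which is indeed $<1<2^{n^{2}+2n}$, so the stated inequality follows a fortiori. One practical reason the paper may have preferred the Moser route is that the same iteration scheme is reused essentially verbatim in the subsequent Lemma \ref{lem:important estimate} for bundle-valued harmonic sections, where the clean scalar heat-kernel representation is not available; your ultracontractivity argument is tailor-made for the scalar self-adjoint case and would not transplant to that setting as directly.
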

\begin{proof}
By Bochner-Weitzenb\"ock formula, we obtain
$$\Delta\frac{|\phi|^2}{2}=\operatorname{Re}(\langle \Delta\phi,\phi\rangle)+|\nabla\phi|^2\geq -\lambda |\phi|^2,$$
where $\operatorname{Re}(\cdot)$ denotes the real part of a complex number.
Set $v:=|\phi|^2$, then 
$$\Delta v\geq -2\lambda v.$$
For $p\geq 1$, integration by parts yields 
$$\int_{\Omega}\frac{2p-1}{p^2}|\nabla v^p|^2dV=-\int_{\Omega}v^{2p-1}\Delta vdV
\leq 2\lambda \int_{\Omega}v^{2p}dV.$$
Combining this with the Sobolev inequality on $\mc^n$, we obtain
$$\left( \int_{\Omega} v^{2p\alpha}dV\right)^{\alpha} \leq pC_4\int_{\Omega} v^{2p}dV,$$
where 
$$\alpha:=\frac{n}{n-1},\quad  C_4:=8\lambda.$$
Now take $p=\alpha^{k-1}$ for $k=1,2,3\cdots,$ and iterate to derive 
\begin{align*}
\left( \int_{\Omega} v^{2\alpha^{k}} dV\right)^{1/\alpha^{k}} &\leq C_4^{\alpha^{-(k-1)}} \alpha^{(k-1)\alpha^{-(k-1)}} \left( \int_{\Omega} v^{2\alpha^{k-1}} dV\right)^{1/\alpha^{k-1}}\\
&\leq C_4^{\sum_{i=1}^k \alpha^{-(i-1)}}\prod_{j=1}^k \alpha^{(j-1)\alpha^{-(j-1)}}\int_{\Omega} vdV.
\end{align*}
The desired estimate follows by taking the limit as $k\rw \infty$.
\end{proof}
Let $0<\mu_1\leq \mu_2\leq \cdots$ denote all the Dirichlet eigenvalues of $-\Delta$ on $\Omega$,
and let $\phi_1,\phi_2,\cdots$ denote the corresponding eigenfunctions. Using Lemma \ref{lem:eigensection estimate}, we derive the following heat kernel estimate:
\begin{cor}\label{cor:heat kernel}
For any $(t,x,y)\in\mr_+\times\Omega\times\Omega$, 
$$|H(t,x,y)|\leq 2^{n^2+4n+1}n\diam(\Omega)^{2n}e^{-\frac{\mu_1t}{2}}t^{-n}.$$
\end{cor}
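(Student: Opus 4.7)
The plan is to start from the spectral expansion
\[
H(t,x,y)=\sum_{k\geq 1}e^{-\mu_k t}\phi_k(x)\phi_k(y),
\]
where $\{\phi_k\}$ is the $L^2(\Omega)$-orthonormal basis of Dirichlet eigenfunctions of $-\Delta$ with eigenvalues $0<\mu_1\leq\mu_2\leq\cdots$. Applying Cauchy--Schwarz to this $\ell^2$ inner product gives $|H(t,x,y)|^2\leq H(t,x,x)H(t,y,y)$, so it suffices to estimate $\sup_{z\in\Omega}H(t,z,z)$. Since $\mu_k\geq\mu_1$, the elementary splitting $e^{-\mu_k t}\leq e^{-\mu_1 t/2}e^{-\mu_k t/2}$ then yields
\[
H(t,z,z)\leq e^{-\mu_1 t/2}\sum_k e^{-\mu_k t/2}\phi_k(z)^2=e^{-\mu_1 t/2}H(t/2,z,z),
\]
extracting the desired exponential factor and reducing matters to a bound on $H(t/2,\cdot,\cdot)$ along the diagonal.

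For this remaining piece I would use Lemma~\ref{lem:eigensection estimate}: since $\|\phi_k\|_{L^2}=1$, it gives $\|\phi_k\|_\infty^2\leq 2^{n^2+2n}\mu_k^n$, so
\[
H(t/2,z,z)\leq 2^{n^2+2n}\sum_k \mu_k^n e^{-\mu_k t/2}.
\]
To control the series I would invoke the numerical inequality $\mu^n\leq n!(4/t)^n e^{\mu t/4}$ (a consequence of $e^s\geq s^n/n!$ applied with $s=\mu t/4$), which gives $\mu_k^n e^{-\mu_k t/2}\leq n!(4/t)^n e^{-\mu_k t/4}$. Summing produces
\[
\sum_k \mu_k^n e^{-\mu_k t/2}\leq n!(4/t)^n\int_\Omega H(t/4,z,z)\,dV(z),
\]
and the trace on the right is then handled by the domain monotonicity of the Dirichlet heat kernel against its Euclidean counterpart on $\mathbb{R}^{2n}$, namely $H(t/4,z,z)\leq(\pi t)^{-n}$. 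Combining with the ball bound $|\Omega|\leq(\omega_{2n}/(2n))\diam(\Omega)^{2n}$ and regrouping the dimensional factors $2^{n^2+2n}$, $4^n$, $n!$, $\omega_{2n}/(2n)$, $\pi^{-n}$ should land on the stated form of the constant.

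I expect the main obstacle to be the shape of the $t$-dependence: the route above, through Lemma~\ref{lem:eigensection estimate} and a trace bound, naturally outputs $t^{-2n}$ rather than the stated $t^{-n}$, because the eigenfunction $L^\infty$-bound loses the cancellation that keeps the free heat kernel at the right order. The cleanest way I see to recover the stated $t^{-n}$ is to bypass the eigenfunction bound at the very last step and instead apply the Euclidean comparison directly to $H(t/2,z,z)\leq(2\pi t)^{-n}$ inside the display above, which yields $|H(t,x,y)|\leq e^{-\mu_1 t/2}(2\pi t)^{-n}$; this is strictly stronger than the claim and absorbs comfortably into the constant $2^{n^2+4n+1}n\diam(\Omega)^{2n}$. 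The arithmetic of matching the exact stated constant, in either route, is a purely mechanical but somewhat delicate bookkeeping task.
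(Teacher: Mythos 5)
Your main route is essentially the paper's: spectral expansion, Lemma~\ref{lem:eigensection estimate} for $\|\phi_k\|_\infty^2\leq 2^{n^2+2n}\mu_k^n$, extraction of $e^{-\mu_1 t/2}$, a bound on $e^{-\mu_k t/4}(\mu_k t)^n$ by $\gamma=\sup_{z>0}e^{-z/4}z^n$, and the Donnelly--Li eigenvalue lower bound combined with $\sum_k e^{-ck^{1/n}}\leq c^{-n}n!$ for the remaining sum. (The paper skips the Cauchy--Schwarz reduction to the diagonal and simply applies $\|\phi_k\|_\infty$ to both factors; this is cosmetic.)

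Your concern about the $t$-power is correct and in fact exposes an error in the stated corollary. In the paper's displayed chain, the step $\sum e^{-\mu_k t}\mu_k^n\leq\gamma e^{-\mu_1 t/2}\sum e^{-\mu_k t/4}$ silently drops $t^{-n}$: the sup applied with $z=\mu_k t$ gives $e^{-\mu_k t/4}\mu_k^n\leq\gamma t^{-n}$, not $\leq\gamma$. Restoring it, the paper's own argument yields $2^{n^2+4n+1}n\diam(\Omega)^{2n}e^{-\mu_1 t/2}t^{-2n}$, the $t^{-2n}$ you predicted. And $t^{-n}$ with a constant proportional to $\diam(\Omega)^{2n}$ cannot hold: sending $t\to 0^+$ at a fixed interior diagonal point, where $H(t,z,z)=(4\pi t)^{-n}(1+o(1))$ and $e^{-\mu_1 t/2}\to 1$, the stated inequality would force $(4\pi)^{-n}\leq 2^{n^2+4n+1}n\diam(\Omega)^{2n}$, which fails once $\diam(\Omega)$ is small enough.

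The same scaling shows your proposed repair does not save the statement as written. The bound $|H(t,x,y)|\leq e^{-\mu_1 t/2}(2\pi t)^{-n}$ that you get from Cauchy--Schwarz, the split $H(t,z,z)\leq e^{-\mu_1 t/2}H(t/2,z,z)$, and domain monotonicity is correct and has the right $t^{-n}$ behaviour, but its constant $(2\pi)^{-n}$ is domain-free while $2^{n^2+4n+1}n\diam(\Omega)^{2n}$ shrinks to zero with the domain, so your bound is not stronger than the claim and does not ``absorb comfortably into'' that constant. The consistent fix is to change the exponent to $t^{-2n}$ (either your trace route or the paper's $\gamma$-route delivers this); one should then also re-trace the tail integral $\int_{t_0/(2K)}^\infty t^{-2n}\,dt$ in the subsequent proof of Theorem~\ref{thm:Green form}(i).
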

\begin{proof}
Fix $(t,x,y)\in\mr_+\times\Omega\times\Omega$.
By the  maximum principle, 
\begin{equation}\label{equ:heat kernel}
|H(t,x,y)|\leq \frac{1}{(4\pi t)^n}e^{-\frac{|x-y|^2}{4t}}.
\end{equation}
Clearly, 
$$\sup_{x>0}xe^{-\frac{x}{n}}=ne^{-1}.$$
Similar to the proof of \cite[Corollary 4.6]{DL82}, one easily obtains (
\begin{equation}\label{equ:eigenvalues estimates}
   \mu_k\geq 4\pi n e^{-1}|\Omega|^{-\frac{1}{n}}k^{\frac{1}{n}},\ \forall k\geq 1.
\end{equation}
Note that for any $c>0$, we have 
$$
\sum_{k=1}^\infty e^{-ck^{\frac{1}{n}}}\leq \int_0^{\infty}e^{-cz^{\frac{1}{n}}}dz\leq c^{-n}n!,
$$
and 
$$\gamma:=\sup_{z>0}e^{-\frac{z}{4}}z^n=e^{-n}(4n)^n,\quad \omega_{2n}=\frac{2\pi^n}{(n-1)!}.$$
By Lemma \ref{lem:eigensection estimate} and Inequality (\ref{equ:eigenvalues estimates}), we obtain 
\begin{align*}
&\quad |H(t,x,y)|\\
&\leq 2^{n^2+2n}\sum_{k=1}^\infty e^{-\mu_kt}\mu_k^n
\leq 2^{n^2+2n}\gamma e^{-\frac{\mu_1t}{2}}\sum_{k=1}^\infty e^{-\frac{\mu_kt}{4}}\\
&\leq 2^{n^2+4n}\pi^{-n}n!|\Omega|e^{-\frac{\mu_1t}{2}}t^{-n}
\leq 2^{n^2+4n+1}n\diam(\Omega)^{2n}e^{-\frac{\mu_1t}{2}}t^{-n}.
\end{align*}
 \end{proof}
 We now proceed to prove Part (i) and (ii) of Theorem \ref{thm:Green form}.\\

{\bf\noindent Proof of (i) of Theorem \ref{thm:Green form}:} Without loss of generality, we may assume $K\neq 0$.
By \cite[Theorem 1.3]{BK23}, 
$$2K\leq \mu_1.$$
Fix any $(x,y)\in\Omega\times\Omega, x\neq y$. By \cite[Theorem 4.3]{DL82}, for all $t>0$, 
\begin{equation}\label{equ:heat comparision}
|H_{r,s}^E(2t,x,y)|\leq e^{2Kt}|H(t,x,y)|,
\end{equation}
then by Corollary \ref{cor:heat kernel} and Inequality (\ref{equ:heat kernel}), 
for any $t_0>0$, we obtain 
\begin{align*}
 &\quad \frac{1}{2}|G_{r,s}^E(x,y)|\leq \int_0^\infty |H_{r,s}^E(2t,x,y)|dt\\
 &\leq e^{t_0}\int_{0}^{\frac{t_0}{2K}}\frac{1}{(4\pi t)^n}e^{-\frac{|x-y|^2}{4t}}dt
 +2^{n^2+4n+1}n\diam(\Omega)^{2n}\int_{\frac{t_0}{2K}}^\infty t^{-n}dt,\\
 &\leq e^{t_0}\frac{|x-y|^{2-2n}}{(2n-2)\omega_{2n}}+2^{n^2+4n+1}n\diam(\Omega)^{2n}\frac{(2K)^{n-1}}{(n-1)t_0^{n-1}}\\
 &\leq \left(\frac{e^{t_0}}{(2n-2)\omega_{2n}}+2^{n^2+5n}n\frac{\diam(\Omega)^{4n-2}K^{n-1}}{(n-1)t_0^{n-1}}\right)|x-y|^{2-2n}.
\end{align*}
Choose $t_0>0$ such that 
$$\frac{1}{(2n-2)\omega_{2n}}=2^{n^2+5n}n\frac{\diam(\Omega)^{4n-2}K^{\frac{n-1}{2}}}{(n-1)t_0^{n-1}},$$
then 
$$t_0\leq 2^{2n+11}(n\omega_{2n})^{\frac{1}{n-1}}\diam(\Omega)^{\frac{4n-2}{n-1}}K^{\frac{1}{2}}.$$
Thus, 
\begin{align*}
&\quad |G_{r,s}^E(x,y)|\\
&\leq \left(K^{\frac{n-1}{2}}+\exp\left(2^{2n+11}(n\omega_{2n})^{\frac{1}{n-1}}\diam(\Omega)^{\frac{4n-2}{n-1}}K^{\frac{1}{2}}\right)\right)
\frac{|x-y|^{2-2n}}{(n-1)\omega_{2n}}.
\end{align*}
\qed\\

{\bf\noindent Proof of (ii) of Theorem \ref{thm:Green form}:} Without loss of generality, we may assume $K\neq 0$.
Set 
$$C_5:=\frac{2^{4n-2}\cdot\max\{8,\operatorname{LD}_\Omega\}}{(4^n-4)\omega_{2n} },\quad  C_6:=\frac{2^{\frac{n^2}{2}+5n-2}\cdot\max\{8,\operatorname{LD}_\Omega\}\cdot \diam(\Omega)}{4^n-4 },$$
$$C_7:=\frac{2^{n^2+8n+1}(n+1)^{n+1}}{(4^n-4)en^{n-1}}\max\{8,\operatorname{LD}_\Omega\}\cdot\diam(\Omega)^{2n+1}.$$
Fix $x,y\in\Omega$. By the Green representation formula,  Part (ii) of Lemma \ref{lem: estimate Green}
and Lemma \ref{lem:eigensection estimate},
for all $k\geq 1$,
\begin{equation}\label{equ:heat kernel 2}
|\phi_k(y)|\leq C_5\mu_k\delta(y)\int_{\Omega}|\phi_k(z)|\cdot|z-y|^{1-2n}dV(z) \leq C_6\mu_k^{\frac{n}{2}+1}\delta(y).
\end{equation}
By Lemma \ref{lem:eigensection estimate} and Inequality (\ref{equ:heat kernel 2}), for all $t>0$,
\begin{align*}
|H(t,x,y)|&\leq \sum_{k=1}^\infty e^{-\mu_k t}|\phi_k(x)|\cdot|\phi_k(y)|\leq 2^{\frac{n^2}{2}+n}C_6\sum_{k=1}^{\infty}e^{-\mu_k t}\mu_k^{n+1}\delta(y)\\
&\leq 2^{\frac{n^2}{2}+3n+2}\frac{(n+1)^{n+1}n!}{e(n\pi)^n}C_6|\Omega|e^{-\frac{\mu_1t}{2}}t^{-n-1}\delta(y)\\
&= C_7e^{-\frac{\mu_1t}{2}}t^{-n-1}\delta(y).
\end{align*}
Then by Part (i) of Lemma \ref{lem: estimate Green}, Inequality (\ref{equ:heat comparision}), for any $t_0>0$,
\begin{align*}
\frac{1}{2}|G_{r,s}^E(x,y)|&\leq e^{t_0}\int_0^{\frac{t_0}{2K}}|H(t,x,y)|dt+\int_{\frac{t_0}{2K}}^\infty |H(t,x,y)|dt\\
&\leq e^{t_0}|G(x,y)|+\int_{\frac{t_0}{2K}}^\infty |H(t,x,y)|dt\\
&\leq C_5e^{t_0}|x-y|^{1-2n}\delta(y)+C_7\delta(y)\int_{\frac{t_0}{2K}}^{\infty}t^{-n-1}dt\\
&\leq \left(C_5e^{t_0}+\frac{C_7\diam(\Omega)^{2n-1}}{n}\frac{(2K)^{n}}{t_0^{n}}\right)|x-y|^{1-2n}\delta(y)\\
\end{align*}
Choose $t_0$ such  that 
$$C_5=\frac{C_7\diam(\Omega)^{2n-1}}{n}\frac{2^nK^{\frac{n}{2}}}{t_0^{n}},$$
then we get 
$$t_0\leq 2^{n+8}(n\omega_{2n})^{\frac{1}{n}}\diam(\Omega)^{4}K^{\frac{1}{2}}.$$
Therefore, 
\begin{align*}
&\quad |G_{r,s}^E(x,y)|\\
&\leq C_5\left(K^{\frac{n}{2}}+\exp\left(2^{n+8}(n\omega_{2n})^{\frac{1}{n}}\diam(\Omega)^{4}K^{\frac{1}{2}}\right)\right)|x-y|^{1-2n}\delta(y),
\end{align*}
and the proof is complete.

Our approach of the $C^1$-estimates of the Green form relies on the following key lemma, whose proof draws inspiration from \cite[Proposition 3.3]{LZZ21}.
\begin{lem}\label{lem:important estimate}
 Let $B(x_0,2\rho)\subset\Omega$ be an open ball. 
 Then there is a constant $C_{11}:=C_{11}(n,K,K_{+},K_{-},\diam(\Omega))>0$ such that for any harmonic section $\phi\in C^\infty(\overline{B(x_0,2\rho)},\Lambda^{r,s}T^*\mc^n\otimes E),$  
$$\sup_{B(x_0,\rho/2)} \left(|\bar\partial\phi|^2 + |\bar\partial^*\phi|^2\right) \leq \frac{C_{11}}{\rho^2}\sup_{B\left(x_0,2\rho\right)} |\phi|^2,$$
where
$$C_{11}=2^{3n^2+9n+6}\omega_{2n}\left(\max\{K_+,K_{-}\}\cdot\diam(\Omega)^2 + 1\right)^{n}\left(1 + K\diam(\Omega)^2\right).$$
\end{lem}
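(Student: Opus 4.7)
The strategy is a two-step argument combining an integral bound on $v:=|\bar\partial\phi|^2+|\bar\partial^*\phi|^2$ in terms of $w:=|\phi|^2$ with a Moser iteration promoting the bound to pointwise. The crucial starting observation is that $\square_{r,s}^E$ commutes with $\bar\partial$ and $\bar\partial^*$ (using $\bar\partial^2=0$), so from $\square_{r,s}^E\phi=0$ one deduces that $\bar\partial\phi$ and $\bar\partial^*\phi$ are themselves harmonic sections of $\Lambda^{r,s+1}T^*\mc^n\otimes E$ and $\Lambda^{r,s-1}T^*\mc^n\otimes E$ respectively. Applying the Bochner--Weitzenb\"ock formula from Section~2 to each of $|\phi|^2,|\bar\partial\phi|^2,|\bar\partial^*\phi|^2$ and using the elementary pointwise bound $|\nabla\phi|^2\geq v/(2n)$, one obtains on $B(x_0,2\rho)$
\[
\Delta w \geq 2|\nabla\phi|^2 - 2Kw, \qquad \Delta v \geq -2\max\{K_+,K_-\}\,v.
\]

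For the $L^1$ bound (Step~1), fix a smooth cutoff $\eta$ with $\eta\equiv 1$ on $B(x_0,\rho)$, $\supp\eta\subset B(x_0,2\rho)$, and $|\nabla\eta|^2\leq C(n)/\rho^2$. Test the Bochner inequality for $w$ against $\eta^2$ and integrate by parts, yielding $2\int\eta^2|\nabla\phi|^2\leq -2\int\eta\nabla\eta\cdot\nabla w+2K\int\eta^2 w$. Apply Kato's inequality $|\nabla w|\leq 2\sqrt{w}|\nabla\phi|$ together with Cauchy--Schwarz (weight $\epsilon=1$) to absorb half of the cross term into $\int\eta^2|\nabla\phi|^2$, then invoke $|\nabla\phi|^2\geq v/(2n)$ to derive
\[
\int_{B(x_0,\rho)} v\,dV \;\leq\; C(n)\,\omega_{2n}\,\rho^{2n-2}\,(1+K\rho^2)\sup_{B(x_0,2\rho)}w.
\]

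For Moser iteration (Step~2), work with the subsolution inequality $\Delta v+2\max\{K_+,K_-\}v\geq 0$. With shrinking radii $r_k=\rho/2+\rho/2^{k+1}$ and smooth cutoffs $\eta_k$ interpolating between $B(x_0,r_{k+1})$ and $B(x_0,r_k)$ (so that $|\nabla\eta_k|^2\leq 2^{2k+6}/\rho^2$), test against $\eta_k^2 v^{2p_k-1}$ with $p_k=\chi^k$ and $\chi=n/(n-1)$. The resulting energy inequality combined with the Sobolev inequality on $\mr^{2n}$ gives
\[
\|v\|_{L^{2p_{k+1}}(B_{r_{k+1}})}\;\leq\;\bigl(C(n)\bigl[p_k\max\{K_+,K_-\}+2^{2k+7}/\rho^2\bigr]\bigr)^{1/(2p_k)}\|v\|_{L^{2p_k}(B_{r_k})}.
\]
Taking the product over $k\geq 0$, using $\sum_k\chi^{-k}=n$ and $\sum_k k\chi^{-k}=n(n-1)$, and converting the resulting $L^2\to L^\infty$ estimate into an $L^1\to L^\infty$ estimate via $\|v\|_{L^2}^2\leq\|v\|_{L^\infty}\|v\|_{L^1}$, one obtains $\sup_{B(x_0,\rho/2)}v\leq C(n)(1+\max\{K_+,K_-\}\rho^2)^n\rho^{-2n}\int_{B(x_0,\rho)}v\,dV$. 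Substituting the bound from Step~1 and enlarging $\rho$ to $\diam(\Omega)$ in the polynomial factors (valid since $B(x_0,2\rho)\subset\Omega$) produces the stated inequality. The main technical hurdle is the careful bookkeeping of universal constants so that the final exponent of~$2$ matches $3n^2+9n+6$: the $n^2$-piece originates from $\sum_k k\chi^{-k}=n(n-1)$ together with the factor $4^k$ from the geometrically shrinking cutoffs, the linear-in-$n$ and constant pieces come from the Sobolev and Cauchy--Schwarz constants summed geometrically with $\sum_k\chi^{-k}=n$, and the $\omega_{2n}$-prefactor enters through the $L^1$ bound via $|B(x_0,2\rho)|=\omega_{2n}(2\rho)^{2n}/(2n)$.
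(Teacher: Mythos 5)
Your proposal takes essentially the same route as the paper: the Bochner--Weitzenb\"ock inequality for $v=|\bar\partial\phi|^2+|\bar\partial^*\phi|^2$ (using $\square\bar\partial=\bar\partial\square$ and $\square\bar\partial^*=\bar\partial^*\square$ so that $\bar\partial\phi$, $\bar\partial^*\phi$ are harmonic), Moser iteration to pass from $L^1$ to $L^\infty$ for $v$, the Gallot--Meyer bound $v\leq 2n|\nabla\phi|^2$, and a Caccioppoli estimate on $|\phi|^2$ to control $\|v\|_{L^1(B(x_0,\rho))}$ by $\sup_{B(x_0,2\rho)}|\phi|^2$. The only place you are terse is the passage from $L^2\to L^\infty$ to $L^1\to L^\infty$: writing $\|v\|_{L^2}^2\leq\|v\|_{L^\infty}\|v\|_{L^1}$ leaves $\sup_{B(x_0,\rho)}v$ on the right, which is over a \emph{larger} ball than the $\sup_{B(x_0,\rho/2)}v$ on the left, so one needs a hole-filling argument or a secondary iteration over a second family of radii --- the paper does precisely this with the auxiliary sequences $\theta_j,\tau_j$ --- but this is a standard step and does not affect the correctness of the overall approach.
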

\begin{proof}

Let $v := |\bar\partial\phi|^2 + |\bar\partial^*\phi|^2$. By the Bochner-Weitzenb\"ock formula and our assumptions,
\[
\Delta v \geq -2K_0 v.
\]
For any $p \geq 1$, this implies
\begin{equation}\label{equ: v^p-1}
\Delta v^p \geq -2pK_0 v^p,
\end{equation}
where 
$$K_0:=\max\{K_+,K_{-}\}.$$
Fix $0 < \mu < \nu \leq 1$, and let $\psi \in C^\infty(B(x_0,\rho))$ be a function satisfying
\[
0 \leq \psi\leq 1,\quad \psi|_{B(x_0,\mu\rho)}\equiv 1,\quad \psi|_{\partial B(x_0,\nu\rho)}\equiv 0,\quad |\nabla\psi| \leq \frac{2}{(\nu - \mu)\rho}.
\]
Multiplying both sides of \eqref{equ: v^p-1} by $\psi^2 v^p$ and integrating over $B\left(x_0,\rho\right)$ yields
\begin{equation}\label{equ: v^p-1 2}
\int_{B\left(x_0,\rho\right)} \psi^2 v^p \Delta v^p \geq -2pK_0 \int_{B\left(x_0,\rho\right)} \psi^2 v^{2p},
\end{equation}
where the volume element is omitted for notational simplicity.
Integration by parts gives
\[
\int_{B\left(x_0,\rho\right)} \psi^2 v^p \Delta v^p = -\int_{B\left(x_0,\rho\right)} \left(|\nabla(\psi v^p)|^2 - v^{2p}|\nabla\psi|^2\right),
\]
from which we obtain
\begin{equation}\label{equ: v^p-1 3}
\int_{B\left(x_0,\rho\right)} |\nabla(\psi v^p)|^2 \leq 2pK_0 \int_{B\left(x_0,\rho\right)} \psi^2 v^{2p} + \int_{B\left(x_0,\rho\right)} v^{2p} |\nabla\psi|^2.
\end{equation}
By the properties of $\psi$, we have
\[
\int_{B(x_0,\nu\rho)} |\nabla(\psi v^p)|^2 \leq \left(2pK_0 + \frac{4}{(\nu - \mu)^2 \rho^2}\right) \int_{B(x_0,\nu \rho)} v^{2p}.
\]
Applying the well known Sobolev inequality on $\mc^n$, we get
\[
\left(\int_{B(x_0,\nu\rho)} (\psi v^p)^{2\alpha}\right)^{1/\alpha} \leq 4 \int_{B(x_0,\nu\rho)} |\nabla(\psi v^p)|^2,
\]
where $\alpha:=n/(n-1).$
This implies 
\begin{equation}\label{iteration 1}
\left(\int_{B(x_0,\mu \rho)} v^{2p\alpha}\right)^{1/\alpha} \leq 8p\left(K_0 + \frac{2}{(\nu - \mu)^2 \rho^2}\right) \int_{B(x_0,\nu \rho)} v^{2p}.
\end{equation}
We now perform Moser iteration. For any $0<\tau<1$, $0<\theta<\rho$, and   $k =0, 1,\cdots$, define
\[
p_k := \alpha^k,\ \mu_k := \frac{1}{2}+\frac{\tau}{2^{k+1}},\ \nu_k := \frac{1}{2}+\frac{\tau}{2^{k}},\ r_k:=\nu_k\theta.
\]
Applying inequality (\ref{iteration 1}) with $p = p_k$, $\mu = \mu_k$, $\nu = \nu_k$, we obtain
\[
\|v^2\|_{L^{\alpha^{k+1}}(B(x_0,r_{k+1}))} \leq \left[2^{2k+6}\alpha^k\left(\frac{K_0}{8}+ \frac{1}{\tau^2\rho^2}\right)\right]^{1/\alpha^k} \|v^2\|_{L^{\alpha^k}(B(x_0,r_k))}.
\]
Iterating this inequality and taking the limit as $k \to \infty$, we get
\begin{equation}\label{equ:moser}
\sup_{B(x_0,\theta/2)} v^2 \leq C_8 \int_{B\left(x_0,\theta/2+\tau\rho\right)} v^2,
\end{equation}
where
\[
C_8 = 2^{\frac{3\alpha}{(\alpha-1)^2}+\frac{6\alpha}{\alpha-1}} \left(\frac{K_0}{8} + \frac{1}{\tau^2\rho^2}\right)^{\frac{\alpha}{\alpha-1}}
=2^{3n^2+3n}\left(\frac{K_0}{8} + \frac{1}{\tau^2\rho^2}\right)^{n}.
\]

Next, for $j=0,1,\cdots$, set 
$$\theta_j:=\sum_{k=0}^j\frac{1}{2^{k}}\rho,\quad  \tau_j:=\frac{1}{2^{j+1}},$$
then from (\ref{equ:moser}), we have 
\begin{align*}
\sup_{B(x_0,\theta_j/2)}v^2&\leq C_9\cdot 4^{jn}\int_{B\left(x_0,\theta_{j+1}/2\right)} v^2\leq C_9\cdot 4^{jn}\left(\sup_{B(x_0,\theta_j/2)}v^2\right)^{\frac{1}{2}}
\cdot \int_{B\left(x_0,\rho\right)} v,
\end{align*}
where 
$$C_9=2^{3n^2+5n}\left(\frac{K_0}{32} + \frac{1}{\rho^2}\right)^{n}.$$
Iterating again, we obtain
\begin{equation*}
\sup_{B(x_0,\rho/2)}v^2\leq 16^nC_9^2\left(\int_{B\left(x_0,\rho\right)} v\right)^2,
\end{equation*}
i.e., 
\begin{equation}\label{equ:moser 1}
\sup_{B(x_0,\rho/2)}v\leq 4^nC_9\int_{B\left(x_0,\rho\right)} v.
\end{equation}

Now we bound the right-hand side of \eqref{equ:moser 1}. Let $\chi\in C_c^\infty(B(x_0,2\rho))$ be a cut-off function satisfying
\[
0 \leq \chi \leq 1,\quad \chi|_{B(x_0,\rho)} \equiv 1,\quad |\chi'| \leq \frac{2}{\rho}.
\]
Using integration by parts, Kato's inequality, and the Cauchy-Schwarz inequality, we obtain
\begin{align}\label{equ:parts}
\int_{B\left(x_0,2\rho\right)} \chi^2 \Delta|\phi|^2 &= -4\int_{B\left(x_0,2\rho\right)} \chi|\phi| \nabla|\phi| \cdot \nabla\chi \nonumber \\
&\leq 4\int_{B\left(x_0,2\rho\right)} \chi|\phi| \cdot |\nabla\phi| \cdot |\nabla\chi| \nonumber \\
&\leq \frac{3}{2}\int_{B\left(x_0,2\rho\right)} |\nabla\phi|^2 \chi^2 + \frac{8}{3}\int_{B\left(x_0,2\rho\right)} |\phi|^2 |\nabla\chi|^2.
\end{align}
The Bochner-Weitzenb\"ock formula and the assumption $\mathfrak{Ric}_{r,s}^E \geq -K$ imply
\[
\Delta|\phi|^2 \geq -2K|\phi|^2 + 2|\nabla\phi|^2.
\]
Combining this with \eqref{equ:parts}, we get
\begin{align*}
0 &\geq \int_{B\left(x_0,2\rho\right)} \left(-\chi^2 \Delta|\phi|^2 - 2K\chi^2|\phi|^2 + 2\chi^2|\nabla\phi|^2\right) \\
&\geq \int_{B\left(x_0,2\rho\right)} \left(\frac{1}{2}|\nabla\phi|^2 \chi^2 - \frac{8}{3}|\phi|^2|\nabla\chi|^2 - 2K\chi^2|\phi|^2\right).
\end{align*}
According to Lemma 6.8 of \cite{GM75} (see also \cite[Lemma 4.1]{EGHP23}), we have
\[
|\bar\partial\phi|^2 + |\bar\partial^*\phi|^2 \leq 2n|\nabla\phi|^2.
\]
Using the definition of $\chi$, we get
\begin{equation}\label{equ:2key}
\frac{1}{4n} \int_{B\left(x_0,\rho\right)} \left(|\bar\partial\phi|^2 + |\bar\partial^*\phi|^2\right) \leq \left(\frac{32}{3\rho^2} + 2K\right)\int_{B\left(x_0,2\rho\right)}  |\phi|^2.
\end{equation}

Combining \eqref{equ:moser 1} and \eqref{equ:2key} yields
\[
\sup_{B(x_0,\rho/2)} \left(|\bar\partial\phi|^2 + |\bar\partial^*\phi|^2\right) \leq C_{10} \sup_{B\left(x_0,2\rho\right)} |\phi|^2,
\]
where
\[
C_{10} =2^{3n^2+9n+3}\omega_{2n}\left(\frac{K_0\rho^2}{32} + 1\right)^{n}\left(\frac{16}{3\rho^2} + K\right)\leq \frac{C_{11}}{\rho^2}.
\]
The proof is complete.
\end{proof}

Now we can give the proof of Part (iii) of Theorem \ref{thm:Green form}.
\begin{proof}
Fix $x,y\in\overline\Omega$ with $x\neq y$. By continuity, we may assume $x,y\in\Omega$.
    We consider two cases.\\
    {\bf Case 1:} $\delta(y)\leq |x-y|$.\\
    \indent In this case, we know $G_{r,s}^E(x,\cdot)$ is  harmonic in $B\left(y,\frac{1}{2}\delta(y)\right)$. 
    By Lemma \ref{lem:important estimate},  
    $$|\bar\partial_y G_{r,s}^E(x,y)|+|\bar\partial_y^* G_{r,s}^E(x,y)|\leq \frac{\sqrt{32C_{11}}}{\delta(y)}\sup_{B\left(y,\frac{1}{2}\delta(y)\right)}|G(x,\cdot)|.$$
    For any $z\in B\left(y,\frac{1}{2}\delta(y)\right)$, 
    $$|x-z|\geq |x-y|-|y-z|\geq |x-y|-\frac{1}{2}\delta(y)\geq \frac{1}{2}|x-y|,$$
    $$\delta(z)\leq \delta(y)+|y-z|\leq 2\delta(y).$$ 
    By Part (ii) of Theorem \ref{thm:Green form},  
    \begin{align*}
    |G(x,z)|\leq C_2|x-z|^{1-2n}\delta(z)\leq 4^nC_2|x-y|^{1-2n}\delta(y).
    \end{align*}
    Therefore,
    \begin{equation}\label{gradient:eequu1}
    |\bar\partial_y G_{r,s}^E(x,y)|+|\bar\partial_y^* G_{r,s}^E(x,y)|\leq 4^nC_2\cdot\sqrt{32C_{11}}|x-y|^{1-2n}.
    \end{equation}
    {\bf Case 2:} $\delta(y)>|x-y|$. \\
    \indent In this case, $G_{r,s}^E(x,\cdot)$ is a harmonic function in $B\left(y,\frac{1}{2}|x-y|\right)$, so 
    $$|\bar\partial_y G_{r,s}^E(x,y)|+|\bar\partial_y^* G_{r,s}^E(x,y)|\leq \frac{\sqrt{32C_{11}}}{|x-y|}\sup_{B\left(y,\frac{1}{2}|x-y|\right)}|G(x,\cdot)|.$$
    For any $z\in B\left(y,\frac{1}{2}|x-y|\right)$, we have 
    $$|x-z|\geq |x-y|-|y-z|\geq \frac{1}{2}|x-y|,$$
    By Part (i) of Theorem \ref{thm:Green form}, 
    $$|G(x,z)|\leq C_1|x-z|^{2-2n}\leq 4^{n-1}C_1|x-y|^{2-2n}.$$
    Thus, 
    \begin{equation}\label{gradient:eequu2}
    |\bar\partial_y G_{r,s}^E(x,y)|+|\bar\partial_y^* G_{r,s}^E(x,y)|\leq 4^{n-1}C_1\cdot\sqrt{32C_{11}}|x-y|^{1-2n}.
    \end{equation}
    \indent Combining Inequalities (\ref{gradient:eequu1}) and (\ref{gradient:eequu2}), we obtain (iii).
\end{proof}

Now we state and prove a more general version of Theorem \ref{thm:section}.
\begin{thm}\label{thm:last}
Under the same assumptions and notations as in Theorem \ref{thm:Green form}. For any 
$1<p<\infty$, set 
$$p^*:=\frac{2np}{2n-1},\quad p^{\sharp}:=\frac{2np}{2n+p-1}.$$
Then for any $f\in C^1(\Omega,\Lambda^{r,s}T^*\mc^n\otimes E)\cap C^0(\overline\Omega,\Lambda^{r,s}T^*\mc^n\otimes E)$,
  there are constants $\delta_1,\delta_2>0$ such that 
$$\|f\|_{L^{p^*}(\Omega)}\leq \delta_1\|\bar\partial f\|_{L^{p^{\sharp}}(\Omega)}+
\delta_1\|\bar\partial^* f\|_{L^{p^{\sharp}}(\Omega)}+\delta_2\|f\|_{L^p(\partial\Omega)}.
$$
\end{thm}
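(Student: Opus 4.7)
\indent The plan is to generalize the proof of Corollary \ref{cor:holomorphic} by dropping the assumption $\bar\partial f = 0$ and $\bar\partial^* f = 0$, so that an interior integral involving $\square_{r,s}^E f$ now accompanies the familiar boundary integral. The Dirichlet Green representation formula for $\square_{r,s}^E$ reads
\begin{equation*}
f(x) = \int_\Omega \langle G_{r,s}^E(x,y), \square_{r,s}^E f(y)\rangle\, dV(y) + \mathcal{B}f(x),
\end{equation*}
where $\mathcal{B}f(x)$ is a boundary integral built from the normal trace of $\nabla_y G_{r,s}^E(x,\cdot)$ paired with $f|_{\partial\Omega}$. I would first establish this formula for $f \in C^2(\overline{\Omega}, \Lambda^{r,s}T^*\mc^n\otimes E)$ and reduce the $C^1\cap C^0$ case in the statement by a standard mollification and exhaustion argument.

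\indent For the interior term, since $G_{r,s}^E(x,\cdot)|_{\partial\Omega} = 0$ and $\square_{r,s}^E = \bar\partial^*\bar\partial + \bar\partial\bar\partial^*$, integration by parts twice cancels all boundary contributions and yields
\begin{equation*}
\int_\Omega \langle G_{r,s}^E, \square_{r,s}^E f\rangle\, dV = \int_\Omega \bigl(\langle \bar\partial_y G_{r,s}^E, \bar\partial f\rangle + \langle \bar\partial_y^* G_{r,s}^E, \bar\partial^* f\rangle\bigr)\, dV.
\end{equation*}
Part (iii) of Theorem \ref{thm:Green form} provides the pointwise bound $|\bar\partial_y G_{r,s}^E| + |\bar\partial_y^* G_{r,s}^E| \leq C_3|x-y|^{1-2n}$. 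Applying Lemma \ref{lem:better estimate} with dimension $2n$ and $a = 1$, one checks that the input exponent $p^\sharp = \tfrac{2np}{2n+p-1}$ corresponds exactly to the target output exponent $p^* = \tfrac{2np}{2n-1}$, so the $L^{p^*}(\Omega)$-norm of the interior integral is controlled by a constant multiple of $\|\bar\partial f\|_{L^{p^\sharp}(\Omega)} + \|\bar\partial^* f\|_{L^{p^\sharp}(\Omega)}$, giving an explicit $\delta_1$ depending on $n$, $p$, $K$, $K_\pm$, $\diam(\Omega)$, and $\operatorname{LD}_\Omega$ through $C_3$.

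\indent For the boundary integral, the kernel of $\mathcal{B}$ inherits the same decay $C|x-y|^{1-2n}$: since $G_{r,s}^E(x,\cdot)$ vanishes on $\partial\Omega$, its tangential derivatives along $\partial\Omega$ also vanish, so the normal trace of $\nabla_y G_{r,s}^E$ reduces to a combination of boundary values of $\bar\partial_y G_{r,s}^E$ and $\bar\partial_y^* G_{r,s}^E$, again controlled by Theorem \ref{thm:Green form}(iii). The bound on $\|\mathcal{B}f\|_{L^{p^*}(\Omega)}$ then follows the scheme used in Section 4 to estimate $B_{\partial\Omega}$: combine the trivial $L^\infty \to L^\infty$ bound with the weak-type $(1,\tfrac{2n}{2n-1})$ estimate from Lemma \ref{lem:weak} (applied in dimension $2n$), and interpolate via Lemma \ref{lem:interpolation} to obtain $\|\mathcal{B}f\|_{L^{p^*}(\Omega)} \leq \delta_2\|f\|_{L^p(\partial\Omega)}$ with an explicit $\delta_2$.

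\indent Combining the two contributions by Minkowski's inequality completes the proof. The main delicate step is extending the interior kernel bound of Theorem \ref{thm:Green form}(iii) to a trace-type pointwise bound on the Poisson-type kernel of $\mathcal{B}$: this is where the Dirichlet vanishing $G_{r,s}^E(x,\cdot)|_{\partial\Omega}=0$ is used essentially, in order to reduce the full gradient of $G_{r,s}^E$ in $y$ to its normal component. Once this reduction is carried out, the remainder of the argument is a direct analogue of the Riesz-potential and Marcinkiewicz interpolation manipulations already used in the proofs of Theorem \ref{thm:laplace} and Corollary \ref{cor:holomorphic}.
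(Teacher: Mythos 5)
Your overall architecture matches the paper's proof: apply the Green representation formula for $\square_{r,s}^E$, integrate by parts the interior term to land on $\int_\Omega(\langle\bar\partial f,\bar\partial G_{r,s}^E\rangle+\langle\bar\partial^* f,\bar\partial^* G_{r,s}^E\rangle)\,dV$, handle it with the Riesz potential bound of Lemma \ref{lem:better estimate} and Theorem \ref{thm:Green form}(iii) (your exponent bookkeeping $p^\sharp\mapsto p^*$ under $I_1$ in dimension $2n$ is correct), and treat the boundary operator by interpolating between a weak-type $(1,\tfrac{2n}{2n-1})$ bound (Lemma \ref{lem:weak}) and an $L^\infty\to L^\infty$ bound via Lemma \ref{lem:interpolation}.

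The gap is in the sentence describing the $L^\infty\to L^\infty$ endpoint as a ``trivial'' bound. In the scalar case of Section~4 this is genuinely trivial because the Poisson kernel $\partial G/\partial\nve_y$ is nonnegative with mass $1$. In the bundle-valued case, however, the kernel of your $\mathcal{B}$ is not scalar and has no sign or mass normalization; the $L^\infty\to L^\infty$ estimate is exactly where the curvature hypothesis $\mathfrak{Ric}_{r,s}^E\geq -K$ enters. The paper handles it by observing that $T_{\partial\Omega}g$ \emph{is} the unique harmonic section $u$ with $\square_{r,s}^E u=0$ in $\Omega$ and $u|_{\partial\Omega}=g$ (this uses that all Dirichlet eigenvalues of $\square_{r,s}^E$ are strictly positive), then invoking the maximum principle for harmonic sections (Lemma \ref{lem:Linfty estimate}), which is a Bochner--Weitzenb\"ock argument with an auxiliary weight $e^{K|z|^2}$. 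The resulting constant is $e^{K\diam(\Omega)}$, not $1$, and this is precisely the factor $(e^{K\diam(\Omega)})^{1-1/p}$ that appears in $\delta_2$ in Theorem \ref{thm:section}. A further point you omit: this identification is first proved for $g\in C^0(\partial\Omega)$ and then extended to $L^\infty$ boundary data by a density argument using the kernel bound of Theorem~\ref{thm:Green form}(iii) again; without it, the hypotheses of Lemma \ref{lem:interpolation} are not verified for general $L^\infty$ input. So while your decomposition and interpolation scheme are the right ones, calling the $L^\infty$ endpoint trivial skips the one place where a genuinely new idea (the section-valued maximum principle under a Ricci lower bound) is required.
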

\indent To prove Theorem \ref{thm:last}, we may assume $f\in C^2(\overline\Omega,\Lambda^{r,s}T^*\mc^n\otimes E)$.
By the Green representation formula (see \cite[Theorem 1.3]{DHQ25}),
\begin{align*}
    f&=\int_{\Omega}\langle \square_{r,s}^Ef,G_{r,s}^E\rangle dV+\int_{\partial \Omega}(\langle f,\bar\partial^*G_{r,s}^E\wedge \bar\partial\rho\rangle-\langle \bar\partial G_{r,s}^E,f\wedge \bar\partial\rho\rangle)\frac{dS}{|\nabla\rho|}\\
    &=\int_{\Omega}\left(\langle \bar\partial f,\bar\partial G_{r,s}^E\rangle+\langle \bar\partial^*f,\bar\partial^*G_{r,s}^E\rangle\right) dV\\
    &\quad +\int_{\partial \Omega}(\langle f,\bar\partial^*G_{r,s}^E\wedge \bar\partial\rho\rangle-\langle \bar\partial G_{r,s}^E,f\wedge \bar\partial\rho\rangle)\frac{dS}{|\nabla\rho|} \text{ in }\Omega,
    \end{align*} 
where $\rho$ is a smooth boundary defining function. By Minkowski inequality, it suffices to prove the following lemma:
\begin{lem}
The following estimates hold:
\begin{itemize}
  \item[(i)] \leavevmode For any $g \in L^{p^\sharp}(\Omega,\Lambda^{r,s}T^*\mc^n\otimes E)$,

  \[
  \|T_\Omega g\|_{L^{p^*}(\Omega)} \leq 2\omega_{2n}^{1-\frac{1}{2n}}
     \left(\frac{36^np}{p-1}\right)^{1-\frac{p}{2n}}
     \left(\frac{p-1}{n-2p}\right)^{\frac{p-1}{2n}}C_3 \|g\|_{L^{p^\sharp}(\Omega)},
  \]
  where
  \[
  T_\Omega g := \int_{\Omega} \langle \bar\partial g,\bar\partial G_{r,s}^E\rangle dV.
  \]
   \item[(ii)] \leavevmode For any $g \in L^{p^\sharp}(\Omega,\Lambda^{r,s}T^*\mc^n\otimes E)$,
  \[
  \|S_\Omega g\|_{L^{p^*}(\Omega)} \leq 2\omega_{2n}^{1-\frac{1}{2n}}
     \left(\frac{36^np}{p-1}\right)^{1-\frac{p}{2n}}
     \left(\frac{p-1}{n-2p}\right)^{\frac{p-1}{2n}}C_3 \|g\|_{L^{p^\sharp}(\Omega)},
  \]
  where
  \[
  S_\Omega g := \int_{\Omega} \langle \bar\partial^* g,\bar\partial^* G_{r,s}^E\rangle dV.
  \]

  \item[(iii)] \leavevmode For any $g\in L^\infty(\partial\Omega,\Lambda^{r,s}T^*\mc^n\otimes E)$,
  \[
  \|T_{\partial\Omega} g\|_{L^{p^*}(\Omega)} \leq 2(p-1)^{\frac{1-2n}{2np}}p^{-\frac{1}{2np}}
(4n\omega_{2n}^{1-\frac{1}{2n}}C_3)^{\frac{1}{p}}(e^{K\diam(\Omega)})^{1-\frac{1}{p}} \|g\|_{L^p(\partial\Omega)},
  \]
  where
  \[
  T_{\partial\Omega} g := \int_{\partial \Omega}(\langle g,\bar\partial^*G_{r,s}^E\wedge \bar\partial\rho\rangle-\langle \bar\partial G_{r,s}^E,g\wedge \bar\partial\rho\rangle)\frac{dS}{|\nabla\rho|}.
  \]
\end{itemize}
\end{lem}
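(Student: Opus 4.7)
The plan is to reduce each operator to a familiar kernel operator via the pointwise bound $|\bar\partial_y G_{r,s}^E(x,y)| + |\bar\partial_y^* G_{r,s}^E(x,y)| \leq C_3 |x-y|^{1-2n}$ supplied by Part (iii) of Theorem \ref{thm:Green form}, and then to invoke the $L^p$-mapping properties already established in the paper.

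For parts (i) and (ii), the modulus of $T_\Omega g(x)$ (respectively $S_\Omega g(x)$) is dominated pointwise by $C_3 \, I_1|g|(x)$, where $I_1$ denotes the Riesz potential of order $a=1$ in ambient dimension $2n$. I would then apply Lemma \ref{lem:better estimate} with $(n,p,a)$ specialized to $(2n, p^\sharp, 1)$; a short algebraic check using $p^\sharp = \frac{2np}{2n+p-1}$ confirms that the Sobolev exponent $\frac{2np^\sharp}{2n-p^\sharp}$ equals $p^*$. The explicit constant then follows by substituting into the formula of Lemma \ref{lem:better estimate} and simplifying via the identity $p^\sharp - 1 = \frac{(2n-1)(p-1)}{2n+p-1}$, with the extra factor $C_3$ arising from the kernel prefactor.

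For part (iii), choosing $\rho$ to be the signed distance function so that $|\nabla\rho| \equiv 1$ on $\partial\Omega$, the same kernel bound gives
\[
|T_{\partial\Omega} g(x)| \leq 2 C_3 \int_{\partial\Omega} |g(y)| \, |x-y|^{1-2n} \, dS(y) = 2 C_3 \cdot J|g|(x),
\]
in the notation of Lemma \ref{lem:weak} applied in ambient dimension $2n$. This yields the weak-type bound $\|T_{\partial\Omega} g\|_{L^{2n/(2n-1),\infty}(\Omega)} \leq 4 n C_3 \, \omega_{2n}^{1-1/(2n)} \|g\|_{L^1(\partial\Omega)}$. For the $L^\infty$ endpoint, I would observe that for smooth $g$ the Green representation formula identifies $T_{\partial\Omega} g$ with the unique harmonic extension of $g$ into $\Omega$ (since the volume integral vanishes on harmonic sections); Lemma \ref{lem:Linfty estimate} then furnishes $\|T_{\partial\Omega} g\|_{L^\infty(\Omega)} \leq e^{K \diam(\Omega)} \|g\|_{L^\infty(\partial\Omega)}$, and a density argument extends this to $L^\infty$ data. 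Marcinkiewicz interpolation (Lemma \ref{lem:interpolation} with its $n$ replaced by $2n$, $C_1 = 4 n C_3 \, \omega_{2n}^{1-1/(2n)}$, and $C_2 = e^{K \diam(\Omega)}$) then produces the stated $L^p(\partial\Omega) \to L^{p^*}(\Omega)$ bound with precisely the displayed constant.

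The main obstacle is the $L^\infty$ endpoint in part (iii): justifying that $T_{\partial\Omega}$ acts as a harmonic-extension operator for rough boundary data requires some care, but can be handled by smooth approximation together with the kernel estimate already in hand. Apart from this, the proof is a careful constant-tracking exercise built on Lemmas \ref{lem:better estimate}, \ref{lem:weak}, \ref{lem:interpolation}, \ref{lem:Linfty estimate} and Part (iii) of Theorem \ref{thm:Green form}.
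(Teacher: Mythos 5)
Your proposal is correct and matches the paper's own argument essentially step for step: parts (i)--(ii) by bounding the kernel via Part (iii) of Theorem \ref{thm:Green form} and feeding it into Lemma \ref{lem:better estimate} with $(n,p,a)=(2n,p^\sharp,1)$; part (iii) by combining the weak-type bound from Lemma \ref{lem:weak} with the $L^\infty$ endpoint from Lemma \ref{lem:Linfty estimate}, then interpolating via Lemma \ref{lem:interpolation} with $n$ replaced by $2n$. The only place where you are slightly more terse than the paper is the density argument at the $L^\infty$ endpoint of (iii): the paper passes through auxiliary $L^t(\partial\Omega)$-approximations and $L^m(\Omega)$-convergence (with $t,m\to\infty$) to extend \eqref{equ:continuous} from $C^0(\partial\Omega)$ data to $L^\infty(\partial\Omega)$ data, whereas you summarize this as "a density argument"; you do flag this as the delicate point, which is the right instinct.
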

\begin{proof}
(i) and (ii) follow directly from Lemma \ref{lem:better estimate} and Part (iii) of Theorem \ref{thm:Green form}.\\
(iii) For any $g\in  C^0(\partial\Omega,\Lambda^{r,s}T^*\mc^n\otimes E)$, we may find 
$u$ such that $\square_{r,s}^Eu=0$ in $\Omega$ and $u|_{\partial\Omega}=g$. By 
Lemma \ref{lem:Linfty estimate},
\begin{equation}\label{equ:continuous}
\|T_{\partial\Omega}g\|_{L^\infty(\Omega)}=\|u\|_{L^\infty(\Omega)}\leq e^{K\diam(\Omega)}\|g\|_{L^\infty(\partial\Omega)}.
\end{equation}
Now for a general $g\in L^\infty(\partial\Omega,\Lambda^{r,s}T^*\mc^n\otimes E)$, any $t>1$ and 
any $1<m<(2nt)/(2n-1)$, we can choose a sequence
$g_k\in C^0(\partial\Omega,\Lambda^{r,s}T^*\mc^n\otimes E)$ such that 
$$\|g_k\|_{L^\infty(\partial\Omega)}\leq \|g\|_{L^\infty(\partial\Omega)},\ \lim_{k\rw\infty}\|g_k-g\|_{L^t(\partial\Omega)}=0.$$
By Theorem \ref{thm:Green form}, it is straightforward to verify that (see the proof of \cite[Theorem 1.4]{DJQ24})
$$\lim_{k\rw\infty}\|T_{\partial\Omega}g_k-T_{\partial\Omega}g\|_{L^{m}(\Omega)}=0.$$
By Inequality (\ref{equ:continuous}) and Hölder's inequality, 
$$\|T_{\partial\Omega}g\|_{L^m(\Omega)}
\leq \limsup_{k\rw\infty}|\Omega|^{\frac{1}{m}}\|T_{\partial\Omega}g_k\|_{L^\infty(\Omega)}
\leq |\Omega|^{\frac{1}{m}}e^{K\diam(\Omega)}\|g\|_{L^\infty(\partial\Omega)}.$$
Let $t,m\rw \infty$, we obtain
\begin{equation}
\|T_{\partial\Omega}g\|_{L^\infty(\Omega)}\leq e^{K\diam(\Omega)} \|g\|_{L^\infty(\partial\Omega)}.
\end{equation}
Note that for any $(r,s)$-form $\alpha$, and any $(0,1)$-form $\beta$
$$|\alpha\wedge\beta|\leq |\alpha|\cdot|\beta|.$$
Using Lemma \ref{lem:weak}, Part (iii) of Theorem \ref{thm:Green form} and Cauchy-Schwarz inequality, we get
$$\|B_{\partial\Omega} g\|_{L^{\frac{2n}{2n-1},\infty}(\Omega)}\leq 2n\omega_{2n}^{1-\frac{1}{2n}}C_3\|g\|_{L^1(\Omega)},
\ \forall g\in L^1(\partial\Omega,\Lambda^{r,s}T^*\mc^n\otimes E).$$
The desired estimate now follows from Lemma \ref{lem:interpolation}.
\end{proof}

\bibliographystyle{alphanumeric}

\end{document}